\newtheorem{theorem}{Theorem}[section]
\newtheorem{lemma}[theorem]{Lemma}
\newtheorem{proposition}[theorem]{Proposition}
\newtheorem{corollary}[theorem]{Corollary}
\theoremstyle{remark}
\newtheorem{remark}[theorem]{\bf Remark}
\def\NN{\mathds{N}}
\def\RR{\mathbb{R}}
\def\QQ{\mathbb{Q}}
\def\CC{\mathbb{C}}
\def\ZZ{\mathbb{Z}}
\def\kk{\mathds{k}}
\begin{document}
	
\def\NN{\mathbb{N}}
\def\RR{\mathds{R}}
\def\HH{I\!\! H}
\def\QQ{\mathbb{Q}}
\def\CC{\mathds{C}}
\def\ZZ{\mathbb{Z}}
\def\DD{\mathds{D}}
\def\OO{\mathcal{O}}
\def\kk{\mathds{k}}
\def\KK{\mathbb{K}}
\def\ho{\mathcal{H}_0^{\frac{h(d)}{2}}}
\def\LL{\mathbb{L}}
\def\L{\mathds{k}_2^{(2)}}
\def\M{\mathds{k}_2^{(1)}}
\def\k{\mathds{k}^{(*)}}
\def\l{\mathds{L}}

\selectlanguage{english}
 \title[On the $2$-class group...]{On the $2$-class group of some  number fields with
 	large degree}
 \author[M. M. Chems-Eddin]{Mohamed Mahmoud CHEMS-EDDIN}
 \address{Mohamed Mahmoud CHEMS-EDDIN: Mohammed First University, Mathematics Department, Sciences Faculty, Oujda, Morocco }
 \email{2m.chemseddin@gmail.com}

 \author[A. Azizi]{Abdelmalek Azizi}
 \address{Abdelmalek Azizi: Mohammed First University, Mathematics Department, Sciences Faculty, Oujda, Morocco }
 \email{abdelmalekazizi@yahoo.fr}

 \author[A. Zekhnini]{Abdelkader Zekhnini}
 \address{Abdelkader Zekhnini: Mohammed First University, Mathematics Department, Pluridisciplinary faculty, Nador, Morocco}
 \email{zekha1@yahoo.fr}

 \subjclass[2010]{11R29; 11R11; 11R23; 11R32.}
 \keywords{Cyclotomic $\ZZ_2$-extension; $2$-rank; $2$-class group.}

\begin{abstract}
	Let $d$ be an odd   square-free integer, $m\geq 3$  any integer  and $L_{m, d}:=\mathbb{Q}(\zeta_{2^m},\sqrt{d})$.
In this paper, we shall determine all the fields $L_{m, d}$ having an odd class number. Furthermore, using the
cyclotomic $\mathbb{Z}_2$-extensions of some number fields, we compute the rank of the $2$-class group of $L_{m, d}$   whenever the prime divisors of $d$ are congruent to $3$ or  $5\pmod 8$.
\end{abstract}

\selectlanguage{english}

\maketitle
\section{\textbf{Introduction}}
Let $K$ be an algebraic  number field. For a prime integer $p$, let  $\mathrm{Cl}_p(K)$ denote the  $p$-class group of $K$, that is the $p$-Sylow subgroup  of its ideal class group $\mathrm{Cl}(K)$ in the wide sense. The class group $\mathrm{Cl}(K)$, its subgroup $\mathrm{Cl}_p(K)$ and  their orders and structures  have been investigated  and studied in many papers for a long time, and there are   many interesting open problems related to these topics  which are the object of
intense studies.

One  classical and difficult problem in algebraic number theory is the determination of the rank of the $p$-class group of a number field $K$. When $p=2$ and $K$ is a quadratic extension of a number field $k$ having an odd class number, the ambiguous class number formula can be used    to determine this rank, involving   units of $k$ which are norms in $K/k$ and ramified primes in $K/k$ (cf. \cite{Gr}). This fact is practically one of the most important means for   structuring the $2$-class group of a given number field of small degree (cf. \cite{chemsZkhnin1,mccall19972}). Our contribution in this article is
to study the $2$-rank  of an infinite  family of  number fields, with large degree over $\mathbb{Q}$. Comparing with other   papers tackling this problem, the main novelty of this article is the combination  of ramification theory, ambiguous class number formula and the theory of
cyclotomic $\mathbb{Z}_2$-extensions of some number fields.

Let $d$   be an odd     square-free integer,  $m\geq 3$  any integer and $L_{m,d}:=\mathbb{Q}(\zeta_{2^m},\sqrt{d})$.
In the present paper, we are interested in studying  the parity of the class number of all  the fields  $L_{m,d}$. Furthermore, we compute the rank of the $2$-class group of $L_{m,d}$   assuming  the prime divisors of $d$ are congruent to $3$ or  $5\pmod 8$. Since the unit group of  $\mathbb{Q}(\zeta_{ 2^{m}})$, with $m\geq7$, is not described until today, the  methods using  unit  groups for computing the rank of the $2$-class group  of a given number field  are not valid for treating such problem in our case when $m\geq 7$. For this, we will call some results from Iwasawa theory to overcome    the problem. In the appendix, we compute the rank of the $2$-class group of $L_{m,d}^+$, the maximal real subfield of $L_{m,d}$, in terms of the number of prime divisors of $d$.

Finally, to sum up, let us highlight the importance of some parts of the present work. Note that  the layers of the  $\ZZ_2$-extensions of  $\kk=\mathbb{Q}(\sqrt{d}, \sqrt{-1})$ were     subject of some recent studies (e.g. \cite{Hubbard,LiouYangXuZhang}); and in this paper, we give   more arithmetical properties of  $L_{m,d}$ (resp. $L_{m,d}^+$), the layers of the cyclotomic $\ZZ_2$-extension  of $\kk$  (resp. $ \mathbb{Q}(\sqrt{d} )$).
Furthermore, we discuss the interesting question of the parity  of the class number  of $L_{m,d}$, and we explicitly give the rank of its $2$-class group which is strongly related to   the interesting problem of the structure of the Iwasawa module (see for example Corollary \ref{cor 4.5} or \cite{chemskatharina}).  The authors of \cite{chemskatharina} used this paper with some other techniques    of Iwasawa theory to determine the structure of the $2$-class group of some fields $L_{m,d}$.

 Before  quoting some preliminary results, let us fix the following notations which will be used throughout this paper.

\section*{Notations}
{\begin{enumerate}[\rm$\star$]
	
		\item $d$:   An odd square-free integer,
		\item  $m$:   A positive integer $\geq 3$,
		\item  $\zeta_{m}$:   An nth primitive root of unity,
		\item  $K_m=$    $\mathbb{Q}(\zeta_{2^m})$,
		\item  $L_{m,d}=$    $K_m(\sqrt{d})$,
				\item  $k^+$:  The maximal real subfield of a number field $k$,
		\item  $\mathrm{Cl}_2(k)$:  The $2$-class group of a number field  $k$,
		 \item  $k_\infty$:  The $\mathbb{Z}_2$-extension of a number field $k$,
		 \item  $k_n$:  The nth layer of  $k_\infty/k$,
		 \item  $X_\infty$: $\varprojlim (\mathrm{Cl}_2(k_n))$,
		 \item $\mathcal O_{k}$:  The ring of integers of  $k$,
		 \item  $h(k)$:  The class number of  $k$,
		 \item  $h_2(k)$:  The $2$-class number of  $k$,
		 \item  $N$:   The   norm map of the extension $L_{m, d}/K_m$,
		 \item  $E_{k}$:   The unit group of $k$,
		 \item  $e_{m,d}$:  Defined by $(E_{K_m}:E_{K_m}\cap N(L_{m,d}))=2^{ e_{m,d}}$,
		 \item  $\left( \frac{\alpha,d}{\mathfrak p}\right)$:  The quadratic  norm residue symbol 	for $L_{m,d}/K_m$,
		 \item  $\varepsilon_l$:  The fundamental unit of  the quadratic   field $\mathbb{Q}(\sqrt l)$,
		 \item  $h_2(d)$:  The $2$-class number of the quadratic   field $\mathbb{Q}(\sqrt{d})$,
		 \item $rank_2(Cl(L_{m,d}))$:  The rank of the $2$-class group of $L_{m,d}$.	
\end{enumerate}}

 \section{\textbf{Preliminary results}}
Let us collect some results that will be used in the sequel.  Let $k$ be an  algebraic  number field and $k_\infty$  a $\mathbb{Z}_2$-extension of $k$, that is a Galois extension of $k$ whose Galois group is topologically isomorphic to the $2$-adic ring $\mathbb{Z}_2$. For a  non-negative integer $n$, denote by $k_n$ the  intermediate field of $k_\infty/k$ with degree $2^n$ over $k$. Begin by the following theorem which deals with ranks and class numbers of the intermediate subextensions of $k_\infty/k$.

  	\begin{theorem}[\cite{fukuda}]\label{lm fukuda}
  	Let $k_\infty/k$ be a $\mathbb{Z}_2$-extension and $n_0$  an integer such that any prime of $k_\infty$ which is ramified in $k_\infty/k$ is totally ramified in $k_\infty/k_{n_0}$.
  	\begin{enumerate}[\rm 1.]
  		\item If there exists an integer $n\geq n_0$ such that   $h_2(k_n)=h_2(k_{n+1})$, then $h_2(k_n)=h_2(k_{m})$ for all $m\geq n$.
  		\item If there exists an integer $n\geq n_0$ such that $rank_2( \mathrm{Cl}(k_n))= rank_2( \mathrm{Cl}(k_{n+1}))$, then
  		$rank_2(\mathrm{Cl}(k_{m}))= rank_2(\mathrm{Cl}(k_{n}))$ for all $m\geq n$.
  	\end{enumerate}
  \end{theorem}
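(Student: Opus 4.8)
The plan is to translate both assertions into statements about the Iwasawa module $X_\infty=\varprojlim\mathrm{Cl}_2(k_n)$, regarded as a module over the Iwasawa algebra $\Lambda=\ZZ_2[[T]]$, and then to run a Nakayama-type stabilization argument. Write $\Gamma=\mathrm{Gal}(k_\infty/k_{n_0})\cong\ZZ_2$ with a topological generator $\gamma$, put $T=\gamma-1$ and $\omega_n=(1+T)^{2^{n-n_0}}-1$ for $n\ge n_0$, so that $\omega_n$ is the element cutting out the $n$-th layer. The transition maps defining $X_\infty$ are the norm maps $N_{m,n}\colon\mathrm{Cl}_2(k_m)\to\mathrm{Cl}_2(k_n)$, and the whole argument rests on understanding these together with the natural surjections $X_\infty/\omega_{n+1}X_\infty\twoheadrightarrow X_\infty/\omega_n X_\infty$. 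Recall that $X_\infty$ is finitely generated over $\Lambda$ by the standard theory.

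First I would record the ramification input. Since every prime of $k_\infty/k$ that ramifies is totally ramified in $k_\infty/k_{n_0}$, for $m\ge n\ge n_0$ the extension $k_m/k_n$ carries a totally ramified prime; hence $k_m\cap H=k_n$, where $H$ denotes the $2$-Hilbert class field of $k_n$ (the prime is simultaneously unramified and totally ramified in $(k_m\cap H)/k_n$), and class field theory then forces $N_{m,n}$ to be surjective. In particular $h_2(k_n)\mid h_2(k_{n+1})$, so $(h_2(k_n))_{n\ge n_0}$ is non-decreasing. The same total-ramification hypothesis is exactly what lets one invoke Iwasawa's description of the layers: tracking the inertia subgroups of the finitely many ramified primes inside $\mathrm{Gal}(L_\infty/k_{n_0})$, with $L_\infty$ the maximal unramified abelian pro-$2$ extension of $k_\infty$, one obtains a natural isomorphism $\mathrm{Cl}_2(k_n)\cong X_\infty/\omega_n X_\infty$ for $n\ge n_0$ (at least when a single prime ramifies; in general a correction by the inertia of the remaining ramified primes is required, as discussed below). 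This identification is the bridge between the arithmetic and the commutative algebra.

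Granting the identification, both parts reduce to one lemma in commutative algebra: if $X$ is a finitely generated $\Lambda$-module with all $X/\omega_n X$ finite, then equality of two consecutive quotients propagates. Indeed, writing $\omega_{n+1}=\xi_n\,\omega_n$ with $\xi_n=\gamma^{2^{n-n_0}}+1\in(2,T)$, the surjection $X/\omega_{n+1}X\twoheadrightarrow X/\omega_n X$ is an isomorphism precisely when $\omega_n X=\omega_{n+1}X=\xi_n(\omega_n X)$; putting $Z=\omega_n X$ this reads $Z=\xi_n Z\subseteq(2,T)Z$, so Nakayama's lemma over the local ring $\Lambda$ gives $Z=0$, whence $X/\omega_m X=X/\omega_n X$ for every $m\ge n$. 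Applied to $X=X_\infty$ (or its inertia-corrected variant) this proves part (1). For part (2) I would run the identical argument over $\mathbb{F}_2[[T]]$ after reducing modulo $2$: since $rank_2(\mathrm{Cl}(k_n))=\dim_{\mathbb{F}_2}X_\infty/(2,\omega_n)X_\infty$, equality of consecutive ranks forces $\overline{\omega_n}\,\overline{X_\infty}=0$ in $\overline{X_\infty}=X_\infty/2X_\infty$ by the same Nakayama step (here $\overline{\omega_n}=T^{2^{n-n_0}}\in(T)$ and $\overline{\xi_n}=T^{2^{n-n_0}}$), so the rank is constant for all $m\ge n$.

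The main obstacle is the ramification bookkeeping behind the isomorphism $\mathrm{Cl}_2(k_n)\cong X_\infty/\omega_n X_\infty$. When a single prime ramifies this is the clean classical statement, but with several ramified primes one must carry along the extra inertia generators, quotient $X_\infty$ by the submodule they generate, and verify that the resulting correction is absorbed \emph{uniformly} from the threshold $n_0$ onward, so that the stabilization genuinely begins at the given $n\ge n_0$ and not merely for $n\gg 0$. Once this identification is installed correctly, the surjectivity of the norm maps and the two Nakayama computations above close the argument; the remainder is formal.
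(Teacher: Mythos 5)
The paper gives no proof of this statement at all --- it is quoted verbatim from Fukuda's paper \emph{Remarks on $\mathbb{Z}_p$-extensions of number fields}, and your argument is essentially Fukuda's original one: surjectivity of the norm maps from total ramification, the identification of $\mathrm{Cl}_2(k_n)$ with a quotient $X_\infty/Y_n$ of the Iwasawa module, and Nakayama's lemma over $\Lambda=\ZZ_2[[T]]$ for the class numbers and over $\mathbb{F}_2[[T]]$ for the ranks. The one point you flag as an ``obstacle'' --- the multi-prime bookkeeping --- is exactly the classical lemma (Washington, Lemma 13.18) applied with $k_{n_0}$ as base field: it gives $\mathrm{Cl}_2(k_n)\cong X_\infty/Y_n$ for all $n\ge n_0$ with $Y_{n+1}=\xi_nY_n$ and $\xi_n\in(2,T)$, so your Nakayama step applies verbatim with $Y_n$ in place of $\omega_nX_\infty$ and the argument closes.
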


 	\begin{theorem}[\text{\cite[Theorem 10.1]{washington1997introduction}}]\label{thm h(k) divise h(L)}
 	If an extension of number fields $L/K$ contains no unramified abelian   subextensions   $F/K$, with $F\not=K$, then $h(K)$ divides $h(L)$.
 \end{theorem}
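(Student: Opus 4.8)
The plan is to argue through the Hilbert class field of $K$ and the behaviour of class numbers in a compositum. Let $H$ denote the Hilbert class field of $K$ in the wide sense, so that $H/K$ is the maximal abelian extension unramified at all finite primes; by class field theory $\mathrm{Gal}(H/K)\cong \mathrm{Cl}(K)$ and hence $[H:K]=h(K)$. The whole strategy is to produce inside the Hilbert class field $H_L$ of $L$ a subextension of $L$ of degree exactly $h(K)$, which will force $h(K)\mid h(L)$.

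First I would form the compositum $LH$ and study $LH/L$. Since $H/K$ is Galois, the restriction map gives an isomorphism $\mathrm{Gal}(LH/L)\cong \mathrm{Gal}(H/H\cap L)$; in particular $\mathrm{Gal}(LH/L)$ embeds into the abelian group $\mathrm{Gal}(H/K)$, so $LH/L$ is abelian. Next I would check that $LH/L$ is unramified at every finite prime: a finite prime of $L$ lies over some finite prime of $K$, and since $H/K$ is unramified there, the usual base-change behaviour of ramification indices shows $LH/L$ inherits this, i.e. it is unramified. Being abelian and unramified at all finite primes, $LH$ is contained in the Hilbert class field $H_L$ of $L$, whence
\[
[LH:L]\ \big|\ [H_L:L]=h(L).
\]

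It remains to compute $[LH:L]$. From the isomorphism above, $[LH:L]=[H:H\cap L]$, so I must identify $H\cap L$. The field $H\cap L$ is simultaneously a subextension of $H/K$, hence unramified and abelian over $K$, and a subextension of $L/K$. By the hypothesis that $L/K$ contains no unramified abelian subextension $F/K$ with $F\neq K$, this forces $H\cap L=K$. Consequently $[LH:L]=[H:K]=h(K)$, and combining with the divisibility from the previous step yields $h(K)\mid h(L)$, as desired. The main point requiring care is the unramifiedness of $LH/L$ together with a consistent use of the wide-sense Hilbert class field throughout (so that "unramified" means unramified at the finite primes on both sides); once the conventions are fixed, the identification $H\cap L=K$ via the hypothesis is the decisive step.
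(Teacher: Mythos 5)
The paper does not actually prove this statement: it is quoted from Washington \cite[Theorem 10.1]{washington1997introduction}, and your argument is precisely the standard proof of that theorem (intersect $L$ with the Hilbert class field $H$ of $K$, use the hypothesis to force $H\cap L=K$, and push $LH$ into the Hilbert class field of $L$), so the overall structure is correct and there is nothing different to compare it against. The one point you should repair is the convention for $H$: the maximal abelian extension of $K$ unramified at all \emph{finite} primes has Galois group the \emph{narrow} class group $\mathrm{Cl}^{+}(K)$, not $\mathrm{Cl}(K)$, and with that choice of $H$ your computation $[LH:L]=[H:K]$ would give $h^{+}(K)$ and the containment $LH\subseteq H_L$ would only yield $h^{+}(K)\mid h^{+}(L)$. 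Since the paper works with the class group in the wide sense, take $H$ to be the maximal abelian extension of $K$ unramified at \emph{all} places, archimedean ones included, so that $[H:K]=h(K)$; the rest of your argument then goes through verbatim, because inertia subgroups at finite and infinite places alike inject under the restriction map $\mathrm{Gal}(LH/L)\hookrightarrow\mathrm{Gal}(H/K)$, so $LH/L$ is unramified everywhere and lies in the Hilbert class field of $L$, while $H\cap L$ is still an unramified abelian subextension of $L/K$ and hence equals $K$ by hypothesis.
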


  \begin{lemma}[\text{\cite[Lamma 8.1]{washington1997introduction}}]\label{lm cyclo units}
  	The cyclotomic units of $K_m$$($resp.  $K_m^+$$)$ are generated by $\zeta_{ 2^{m}}$$($resp.  $-1$$)$ and $\xi_{k,m}=\zeta_{2^m}^{(1-k)/2}\frac{1-\zeta_{2^m}^k}{ 1-\zeta_{2^m}},$ where $k$ is an odd integer such that  $1< k< 2^{m-1}$.
  \end{lemma}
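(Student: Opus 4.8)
The plan is to argue directly from the definition of the cyclotomic units as $C=V\cap E_{K_m}$, where $V$ is the multiplicative group generated by $-1$, $\zeta_{2^m}$ and the elements $1-\zeta_{2^m}^{a}$ with $1\le a\le 2^m-1$, and where the cyclotomic units of $K_m^+$ are $C^+=C\cap K_m^+$. The feature that makes the prime-power level $n=2^m$ tractable is that $2$ is the only ramified prime and is totally ramified: there is a unique prime $\mathfrak p$ above $2$, namely $\mathfrak p=(1-\zeta_{2^m})$ with ramification index $e=2^{m-1}=[K_m:\mathbb{Q}]$. Each $1-\zeta_{2^m}^{a}$ has norm a power of $2$, hence is supported only at $\mathfrak p$; consequently an element $x=\pm\zeta_{2^m}^{\,j}\prod_a(1-\zeta_{2^m}^{a})^{c_a}$ of $V$ lies in $E_{K_m}$ precisely when $v_{\mathfrak p}(x)=0$.

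The next step is to compute $v_{\mathfrak p}(1-\zeta_{2^m}^{a})$. Writing $a=2^{s}b$ with $b$ odd, one has $1-\zeta_{2^m}^{a}=1-\zeta_{2^{m-s}}^{\,b}$, which generates the prime over $2$ in $\mathbb{Q}(\zeta_{2^{m-s}})$; comparing ramification indices in the tower gives $v_{\mathfrak p}(1-\zeta_{2^m}^{a})=2^{s}=2^{\,v_2(a)}$. In particular, for odd $a$ the quotient $q_a:=\frac{1-\zeta_{2^m}^{a}}{1-\zeta_{2^m}}$ has valuation $0$, so it is a genuine unit, while for every $a$ the element $\frac{1-\zeta_{2^m}^{a}}{(1-\zeta_{2^m})^{2^{v_2(a)}}}$ is a cyclotomic unit. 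This lets me rewrite each generator of $V$ as (a cyclotomic unit) times a power of $1-\zeta_{2^m}$, so that $C$ is generated by $\zeta_{2^m}$ together with all these unit factors. To cut the unit factors down to the odd quotients $q_a$ I would induct on $m$: the factors attached to even $a$ descend from the subfield $\mathbb{Q}(\zeta_{2^{m-1}})$ (recall $\zeta_{2^{m-1}}=\zeta_{2^m}^{2}\in\langle\zeta_{2^m}\rangle$), and by the inductive hypothesis they are products of lower-level odd quotients, which in turn are expressed through the $q_a$ by repeated use of $1-\zeta_{2^m}^{2^{s}}=(1-\zeta_{2^m})\prod_{j=0}^{s-1}(1+\zeta_{2^m}^{2^{j}})$. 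The outcome is that $C=\langle\,\zeta_{2^m},\,q_a:\ a\ \text{odd},\ 1<a<2^m\,\rangle$.

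Finally I would normalise the $q_a$ to real units. As $k$ is odd, $(1-k)/2\in\ZZ$ and $\zeta_{2^m}^{(1-k)/2}q_k=\frac{\sin(k\pi/2^m)}{\sin(\pi/2^m)}=\xi_{k,m}$ is real, so $\xi_{k,m}\in K_m^+$ and $q_k=\zeta_{2^m}^{(k-1)/2}\xi_{k,m}$. The sine formula yields $\xi_{2^m-k,m}=\xi_{k,m}$ and $\xi_{1,m}=1$, so the odd residues modulo $2^m$ pair as $\{k,2^m-k\}$ and one may keep only odd $k$ with $1<k<2^{m-1}$; this leaves exactly $2^{m-2}-1$ units $\xi_{k,m}$, matching the unit rank of the totally real field $K_m^+$ of degree $2^{m-2}$. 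Since each $q_k$ differs from $\xi_{k,m}$ by a power of $\zeta_{2^m}$, the group $C$ is generated by $\zeta_{2^m}$ and the $\xi_{k,m}$; intersecting with $\RR$ and using that the only real roots of unity are $\pm1$ gives that $C^+$ is generated by $-1$ and the $\xi_{k,m}$, as claimed.

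The heart of the argument, and the step I expect to be the main obstacle, is the completeness of the generating set, that is, ruling out extra cyclotomic units. This is exactly the combinatorial bookkeeping behind the single valuation relation $\sum_a c_a 2^{v_2(a)}=0$ together with the inductive absorption of the even-index factors $1-\zeta_{2^m}^{a}$ coming from the subfields. Once this is settled, the real normalisation and the range reduction via $\xi_{2^m-k,m}=\xi_{k,m}$ are routine.
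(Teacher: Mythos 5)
The paper does not prove this lemma at all: it is quoted verbatim from Washington's book (Lemma 8.1 there), so there is no in-paper proof to compare against. Your argument is a correct reconstruction of exactly that standard proof --- the valuation computation $v_{\mathfrak p}(1-\zeta_{2^m}^{a})=2^{v_2(a)}$ at the unique totally ramified prime above $2$, the inductive absorption of the even-index factors into lower levels, and the real normalisation $\xi_{k,m}=\zeta_{2^m}^{(1-k)/2}q_k$ with the pairing $\xi_{2^m-k,m}=\xi_{k,m}$ --- so it matches the cited source rather than diverging from the paper.
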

The following result is a consequence of ramification theory in a Kummer extension.
  \begin{theorem}[\text{\cite{d.hilbet}}]
  	Let $K/k$ be a quadratic extension and $\mu\in k$ prime to $2$ such that $K=k(\sqrt{\mu})$. The extension $K/k$ is unramified at  finite primes if and only if $\mu$ verifies the following properties:
  	\begin{enumerate}[\rm 1.]
  		\item The principal ideal generated by $\mu$ is a square of a fractional ideal of $k$.
  		\item There exists $ \xi\in k$ such that $\mu\equiv  \xi^2\pmod 4$.
  	\end{enumerate}
  \end{theorem}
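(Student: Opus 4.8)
The plan is to reduce the global ramification statement to a collection of purely local ones and to treat primes of odd residue characteristic separately from the dyadic primes. Since $K/k$ is unramified at a finite prime $\mathfrak p$ precisely when the completed extension $K\otimes_k k_{\mathfrak p}$ is unramified over $k_{\mathfrak p}$, I would fix a finite prime $\mathfrak p$ and analyse the local quadratic algebra $k_{\mathfrak p}(\sqrt\mu)$ according to whether $\mathfrak p$ lies above an odd prime or above $2$.

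For an odd prime $\mathfrak p$, the key observation is that ramification is governed only by the parity of $v_{\mathfrak p}(\mu)$. Writing $\mu=\pi^{v_{\mathfrak p}(\mu)}u$ with $\pi$ a uniformizer and $u$ a unit, one sees that if $v_{\mathfrak p}(\mu)$ is even then $k_{\mathfrak p}(\sqrt\mu)=k_{\mathfrak p}(\sqrt u)$, and the polynomial $x^2-u$ has discriminant $4u$, a unit at $\mathfrak p$, so the extension is unramified (possibly trivial); whereas if $v_{\mathfrak p}(\mu)$ is odd the discriminant $4\pi u$ has odd valuation and the extension is tamely ramified. Hence $\mathfrak p$ is unramified if and only if $v_{\mathfrak p}(\mu)$ is even. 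Since $\mu$ is prime to $2$, all dyadic valuations of $\mu$ vanish, so requiring $v_{\mathfrak p}(\mu)$ to be even at every odd $\mathfrak p$ is exactly the statement that all exponents in the factorisation of $(\mu)$ are even, i.e. that $(\mu)$ is the square of a fractional ideal. This identifies the odd part of the ramification condition with property $1$.

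For a dyadic prime $\mathfrak p$ (residue characteristic $2$), the hypothesis that $\mu$ is prime to $2$ makes $\mu$ a local unit, and the heart of the argument is the wild-ramification criterion: $k_{\mathfrak p}(\sqrt\mu)/k_{\mathfrak p}$ is unramified if and only if $\mu$ is congruent to a square modulo $\mathfrak p^{2e_{\mathfrak p}}=(4)$, where $e_{\mathfrak p}=v_{\mathfrak p}(2)$. For the sufficiency direction I would use the completing-the-square trick: if $\mu=\xi^2+4\beta$ with $\beta\in\mathcal O_k$, then $\theta=\tfrac12(\xi+\sqrt\mu)$ satisfies the monic integral equation $\theta^2-\xi\theta-\beta=0$, whose discriminant is exactly $\mu$, a unit at $\mathfrak p$; thus $\mathcal O_{\mathfrak p}[\theta]$ is $\mathfrak p$-maximal with unit discriminant and $\mathfrak p$ is unramified. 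The converse, that an unramified dyadic prime forces $\mu$ to be a square modulo $4$ locally, is the delicate point and requires analysing the filtration of the unit group $\mathcal O_{\mathfrak p}^\times$ by its square classes and comparing it with the unique unramified quadratic extension of $k_{\mathfrak p}$; this is precisely the step where residue characteristic $2$ obstructs the naive odd-prime reasoning, and I expect it to be the main obstacle of the proof.

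Finally I would globalise the dyadic condition. Using the Chinese Remainder Theorem in the form $\mathcal O_k/4\mathcal O_k\cong\prod_{\mathfrak p\mid 2}\mathcal O_k/\mathfrak p^{2e_{\mathfrak p}}$ (which comes from $(2)=\prod_{\mathfrak p\mid 2}\mathfrak p^{e_{\mathfrak p}}$), the local requirement that $\mu$ be a square modulo $\mathfrak p^{2e_{\mathfrak p}}$ at each dyadic prime is equivalent to $\mu$ being a square in $\mathcal O_k/4$, i.e. to the existence of a single $\xi\in k$ with $\mu\equiv\xi^2\pmod 4$; this is property $2$. Combining the two halves, $K/k$ is unramified at all finite primes if and only if both $1$ and $2$ hold, which proves the theorem.
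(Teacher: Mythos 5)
The paper does not actually prove this statement: it records it as a classical consequence of Hilbert's ramification theory for relative quadratic fields and cites \cite{d.hilbet}, so there is no in-paper argument to compare yours against. Judged on its own terms, your reduction is the standard and correct one: localize at each finite prime; at odd primes observe that ramification is detected by the parity of $v_{\mathfrak p}(\mu)$, which together with the hypothesis that $\mu$ is prime to $2$ turns ``unramified at every odd prime'' into condition 1; at dyadic primes use the integrality of $\tfrac12(\xi+\sqrt{\mu})$, whose minimal polynomial has unit discriminant $\mu$, to get sufficiency of condition 2; and glue the local dyadic congruences into a single congruence modulo $4$ by the Chinese Remainder Theorem.

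The genuine gap is the step you flag yourself and then do not carry out: the converse at a dyadic prime, namely that if $k_{\mathfrak p}(\sqrt{\mu})/k_{\mathfrak p}$ is unramified then $\mu$ is congruent to a square modulo $\mathfrak p^{2e_{\mathfrak p}}$. Announcing that this ``requires analysing the filtration of the unit group'' and is ``the main obstacle'' is a placeholder, not an argument. The cleanest way to close it: the unramified quadratic extension of $k_{\mathfrak p}$ is generated by a root $\omega$ of $x^2-x-c$ lifting an Artin--Schreier extension of the residue field, hence equals $k_{\mathfrak p}(\sqrt{1+4c})$; if $k_{\mathfrak p}(\sqrt{\mu})$ coincides with it (or is trivial), then $\mu=(1+4c)\lambda^{2}$ (or $\mu=\lambda^{2}$) with $\lambda$ forced to be a unit because $\mu$ is one, so $\mu\equiv\lambda^{2}\pmod{4}$ in $\mathcal O_{\mathfrak p}$, and $\lambda$ may be replaced by an element of $\mathcal O_k$ congruent to it modulo $4$. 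With that inserted your outline becomes a complete proof; you should also make explicit what $\mu\equiv\xi^{2}\pmod 4$ means when $\mu$ is not integral (a condition in $\mathcal O_{\mathfrak p}$ at each $\mathfrak p\mid 2$, with one global $\xi$ obtained by weak approximation).
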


  	\begin{lemma}[\cite{chemsZkhnin1}]\label{lemma 1 values of symbls p=3 mod 8}
  Let $p$ be a   prime integer and  $\mathfrak p_{K_3}$ a prime ideal of $K_3 $ lying over $p$.
  	\begin{enumerate}[\rm 1.]
  		\item If  $p\equiv 3\mod 8$, then
  		$\left( \frac{\zeta_{8}, p}{ \mathfrak p_{K_3}}\right)=-1 \text{ and } \left( \frac{ \varepsilon_2, p}{\mathfrak p_{ K_3}}\right)=-1
  		.$
  		\item If   $p\equiv 5\mod 8$,  then
  		$\left( \frac{\zeta_{8}, p}{  \mathfrak p_{K_3}}\right)=-1\text{  and }\left( \frac{ \varepsilon_2, p}{\mathfrak p_{K_3}}\right)=1.
  		$	
  	\end{enumerate}
  \end{lemma}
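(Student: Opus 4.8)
The plan is to evaluate both norm residue symbols through the tame symbol formula at the prime $\mathfrak p_{K_3}\nmid 2$, reducing everything to a squareness question in a finite field. Since $p$ is odd it is unramified in $K_3=\mathbb{Q}(\zeta_8)$, and because $p\equiv 3$ or $5\pmod 8$ has order $2$ in $(\mathbb{Z}/8\mathbb{Z})^\times$, the residue degree of $\mathfrak p_{K_3}$ over $p$ is $2$; thus the residue field is $\mathbb{F}_{p^2}$ and $v_{\mathfrak p_{K_3}}(p)=1$. As $\zeta_8$ and $\varepsilon_2$ are units at $\mathfrak p_{K_3}$ while $v_{\mathfrak p_{K_3}}(p)=1$, the tame symbol formula collapses to
\[
\left( \frac{\alpha, p}{\mathfrak p_{K_3}}\right)=\left( \frac{\alpha}{\mathfrak p_{K_3}}\right),\qquad \alpha\in\{\zeta_8,\varepsilon_2\},
\]
the right-hand side being the quadratic residue symbol, equal to $+1$ exactly when $\alpha\bmod\mathfrak p_{K_3}$ is a square in $\mathbb{F}_{p^2}^\times$. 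So the computation reduces to deciding squareness in $\mathbb{F}_{p^2}^\times$, for which I would use the criterion that $x\in\mathbb{F}_{p^2}^\times$ is a square if and only if its norm $N_{\mathbb{F}_{p^2}/\mathbb{F}_p}(x)=x^{p+1}$ is a square in $\mathbb{F}_p^\times$, which follows from the identity $x^{(p^2-1)/2}=N_{\mathbb{F}_{p^2}/\mathbb{F}_p}(x)^{(p-1)/2}$.

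For $\zeta_8$ the argument is uniform over both congruence classes. Its reduction has exact order $8$ in the cyclic group $\mathbb{F}_{p^2}^\times$ of order $p^2-1$, hence is a square if and only if $16\mid p^2-1$. Writing $p^2-1=(p-1)(p+1)$ one checks that $v_2(p^2-1)=3$ whenever $p\equiv 3$ or $5\pmod 8$, so $16\nmid p^2-1$; therefore $\zeta_8$ is a non-square and $\left( \frac{\zeta_8, p}{\mathfrak p_{K_3}}\right)=-1$ in both cases.

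For $\varepsilon_2=1+\sqrt2$ I would exploit $\mathbb{Q}(\sqrt2)\subseteq K_3$. Since $p\equiv 3,5\pmod 8$ makes $2$ a quadratic non-residue modulo $p$, the prime $p$ is inert in $\mathbb{Q}(\sqrt2)$, its residue field is again $\mathbb{F}_{p^2}$, and the Frobenius acts as the nontrivial automorphism $\sqrt2\mapsto-\sqrt2$. Consequently
\[
N_{\mathbb{F}_{p^2}/\mathbb{F}_p}\!\left(\varepsilon_2\bmod\mathfrak p_{K_3}\right)=\overline{(1+\sqrt2)(1-\sqrt2)}=\overline{-1},
\]
the reduction of $N_{\mathbb{Q}(\sqrt2)/\mathbb{Q}}(\varepsilon_2)=-1$. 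By the criterion above, $\varepsilon_2$ is a square in $\mathbb{F}_{p^2}^\times$ precisely when $-1$ is a square in $\mathbb{F}_p^\times$, i.e. when $p\equiv 1\pmod 4$. Hence $\left( \frac{\varepsilon_2, p}{\mathfrak p_{K_3}}\right)=-1$ for $p\equiv 3\pmod 8$ and $=+1$ for $p\equiv 5\pmod 8$, which settles both items. The only delicate points are the clean reduction to the residue field via the tame symbol, which requires $\mathfrak p_{K_3}\nmid 2$ together with the valuation bookkeeping above, and the inertness of $p$ in $\mathbb{Q}(\sqrt2)$ that pins down the Frobenius action; once these are in place, everything reduces to a routine finite-field square computation.
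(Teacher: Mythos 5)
The paper does not prove this lemma at all: it is imported verbatim from \cite{chemsZkhnin1} as a black box, so there is no internal proof to compare against. Your argument is, as far as I can check, a correct and self-contained derivation. The reduction of the norm residue symbol to a quadratic residue symbol in the residue field is legitimate: $\mathfrak p_{K_3}\nmid 2$, $\zeta_8$ and $\varepsilon_2$ are $\mathfrak p_{K_3}$-units, $v_{\mathfrak p_{K_3}}(p)=1$ since $p$ is unramified, and the residue degree is $2$ because $p\equiv 3,5\pmod 8$ has order $2$ in $(\ZZ/8\ZZ)^\times$; the tame symbol then gives $\left(\frac{\alpha,p}{\mathfrak p_{K_3}}\right)=\left(\frac{\alpha}{\mathfrak p_{K_3}}\right)$ on $\mathbb{F}_{p^2}^\times$. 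The order-$8$ count for $\zeta_8$ is right ($v_2(p^2-1)=3$ in both congruence classes, so $16\nmid p^2-1$ and $\zeta_8$ is a non-square), and the norm criterion $x\in\mathbb{F}_{p^2}^{\times}$ square $\iff N_{\mathbb{F}_{p^2}/\mathbb{F}_p}(x)\in(\mathbb{F}_p^\times)^2$ combined with $N(\varepsilon_2)=-1$ (valid because $p$ is inert in $\QQ(\sqrt2)$, so the Frobenius really does send $\sqrt2$ to $-\sqrt2$) cleanly separates the two cases via $\left(\frac{-1}{p}\right)$. This is in the same spirit as the computations the authors do carry out elsewhere (e.g.\ in Lemma \ref{lm computations}, where symbols are pushed down towers by norms), but your route through the residue field is more direct and makes the lemma independent of the cited reference; the one thing worth stating explicitly if this were to be inserted is the normalization of the norm residue symbol being used, so that the tame formula applies verbatim.
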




  \begin{proposition}\label{prop decoposition}
  	Let $m \geq 3$ be an integer and $d$  an odd square-free integer. The ring of integers of $L_{m, d}$ is given by   $$	\mathcal{O}_{L_{m, d}}=\left\{  \begin{array}{ccc}
  	\mathbb{Z}[\zeta_{2^m}, \frac{1+\sqrt{d}}{2}]& \text{ if}& d\equiv 1\mod 4,\\
  	\mathbb{Z}[\zeta_{2^m}, \frac{1+\sqrt{-d}}{2}]& \text{ if}& d\equiv 3\mod 4.
  	\end{array}
  	\right.$$
  	
  	Furthermore,  the relative  discriminant of $L_{m, d}/K_m$ is $\delta_{L_{m, d}/K_m}= d\mathcal{O}_{L_{m, d}}$.
  \end{proposition}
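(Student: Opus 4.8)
The plan is to exhibit a relative integral basis of $L_{m,d}/K_m$ and then to identify the full ring of integers by a single relative-discriminant computation; the whole argument hinges on the fact that every prime divisor of the odd integer $d$ is unramified in the cyclotomic field $K_m$.

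First I would invoke the classical fact that $\mathcal{O}_{K_m}=\mathbb{Z}[\zeta_{2^m}]$, so that it suffices to describe $\mathcal{O}_{L_{m,d}}$ as an $\mathcal{O}_{K_m}$-algebra. Since $\sqrt{-1}=\zeta_4\in K_m$ for $m\geq 3$, one has $K_m(\sqrt d)=K_m(\sqrt{-d})$, which lets me pass from $\sqrt d$ to $\sqrt{-d}$ when $d\equiv 3\pmod 4$. Accordingly I set $\theta=\frac{1+\sqrt d}{2}$ when $d\equiv 1\pmod 4$ and $\theta=\frac{1+\sqrt{-d}}{2}$ when $d\equiv 3\pmod 4$. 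In each case the radicand is $\equiv 1\pmod 4$, so $\theta$ is a root of a monic quadratic $x^2-x+c$ with $c\in\mathbb{Z}$ and is therefore an algebraic integer; hence $\mathcal{O}_{K_m}[\theta]=\mathbb{Z}[\zeta_{2^m},\theta]\subseteq\mathcal{O}_{L_{m,d}}$, and $\{1,\theta\}$ is a free $\mathcal{O}_{K_m}$-basis of this order.

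Next I would compute the relative discriminant of the order $\mathcal{O}_{K_m}[\theta]$. Evaluating the trace form on $\{1,\theta\}$ --- equivalently, taking the discriminant of the minimal polynomial of $\theta$ --- gives $\mathfrak{d}\bigl(\mathcal{O}_{K_m}[\theta]/\mathcal{O}_{K_m}\bigr)=d\,\mathcal{O}_{K_m}$ when $d\equiv 1\pmod 4$, and $(-d)\mathcal{O}_{K_m}=d\,\mathcal{O}_{K_m}$ when $d\equiv 3\pmod 4$. The decisive remark is that $d\,\mathcal{O}_{K_m}$ is square-free as an ideal: $d$ is square-free in $\mathbb{Z}$, and since every prime divisor of $d$ is odd it is unramified in $K_m/\mathbb{Q}$ (only $2$ ramifies in $\mathbb{Q}(\zeta_{2^m})$), so $d\,\mathcal{O}_{K_m}$ factors into distinct prime ideals each with exponent one.

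Finally, the discriminants of $\mathcal{O}_{K_m}[\theta]$ and of the maximal order are linked by $\mathfrak{d}\bigl(\mathcal{O}_{K_m}[\theta]/\mathcal{O}_{K_m}\bigr)=\mathfrak{a}^{2}\,\delta_{L_{m,d}/K_m}$, where $\mathfrak{a}=[\mathcal{O}_{L_{m,d}}:\mathcal{O}_{K_m}[\theta]]$ is the module index, an ideal of $\mathcal{O}_{K_m}$. As the left-hand side is square-free, $\mathfrak{a}^{2}$ divides a square-free ideal and must therefore equal $\mathcal{O}_{K_m}$; thus the order is maximal, which yields $\mathcal{O}_{L_{m,d}}=\mathbb{Z}[\zeta_{2^m},\theta]$ and simultaneously $\delta_{L_{m,d}/K_m}=d\,\mathcal{O}_{K_m}$. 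The one delicate point is the behaviour at the prime above $2$, but it is handled for free: since $d$ is a unit there, that prime contributes trivially to the discriminant of the order, so $L_{m,d}/K_m$ is automatically unramified over $2$; if desired, this can also be verified directly from the Kummer-type ramification criterion quoted above, using $\pm d\equiv 1\pmod 4$ together with $\sqrt{-1},\sqrt 2\in K_m$.
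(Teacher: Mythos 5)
Your proposal is correct, but it establishes maximality of the order $\mathbb{Z}[\zeta_{2^m},\theta]$ by a different mechanism than the paper. The paper observes that $\delta_{K_m}$ is a power of $2$ while $\delta_{\mathbb{Q}(\sqrt{d})}=d$ is odd (after replacing $d$ by $-d$ in the case $d\equiv 3\pmod 4$, using $i\in K_m$ exactly as you do), and then invokes the classical theorem that for two number fields with coprime discriminants the ring of integers of the compositum is the product $\mathcal{O}_{K_m}\mathcal{O}_{\mathbb{Q}(\sqrt{d})}$; the relative discriminant is then read off from the power basis $\{1,\theta\}$. You instead compute the discriminant of the free $\mathcal{O}_{K_m}$-order $\mathcal{O}_{K_m}[\theta]$, note that $d\,\mathcal{O}_{K_m}$ is a squarefree ideal because the odd primes dividing $d$ are unramified in $K_m$, and conclude via the index relation $\mathfrak{d}(\mathcal{O}_{K_m}[\theta]/\mathcal{O}_{K_m})=\mathfrak{a}^2\,\delta_{L_{m,d}/K_m}$ that $\mathfrak{a}$ is trivial. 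Your route is marginally more self-contained (it avoids citing the coprime-discriminant compositum theorem and proves maximality and the value of $\delta_{L_{m,d}/K_m}$ in one stroke), at the cost of invoking the relative conductor--discriminant index formula; the paper's route is shorter on the page but leans on a heavier imported statement. Both are complete and the numerical computations (discriminant of $x^2-x+\tfrac{1-d}{4}$ equal to $d$) agree.
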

  \begin{proof}
  	Assume that  $d\equiv 1\pmod 4$,  then  $\delta_{K_m}\wedge \delta_{\mathbb{Q}(\sqrt{d})}=1$,  so $\mathcal{O}_{L_{m, d}}=\mathcal{O}_{K_{m}}\mathcal{O}_{\mathbb Q(\sqrt{d})}=\mathbb{Z}[\zeta_{2^m}, \frac{1+\sqrt{d}}{2}]=\mathcal{O}_{K_{m}}[\frac{1+\sqrt{d}}{2}].$ So the relative discriminant of  $L_{m, d}/K_m$ is generated by $\mathrm{disc}_{L_{m, d}/K_m}(1,  \frac{1+\sqrt{d}}{2})=( \frac{1+\sqrt{d}}{2} - \frac{1-\sqrt{d}}{2})^2=d.$
  	If $d\equiv 3\pmod 4$,  then  $-d\equiv 1\pmod 4$. As we have  $\mathcal{O}_{L_{m, d}}=\mathcal{O}_{L_{m, -d}}$, so by the previous case we easily deduce  the result.
  \end{proof}

  	\begin{proposition}\label{prop deco to 2}
  	Let $m \geq 4$ be an integer and $p$  a prime integer. Then,  $p$ decomposes into the product of  two prime ideals of $K_m$  if and only if   $p\equiv3$ or $5 \pmod 8$.
  \end{proposition}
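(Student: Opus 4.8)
The plan is to invoke the classical decomposition law for rational primes in cyclotomic fields and then translate the resulting condition on the multiplicative order of $p$ into a congruence modulo $8$. Since $p$ is odd it is unramified in $K_m/\mathbb{Q}$, and under the canonical identification of $\mathrm{Gal}(K_m/\mathbb{Q})$ with $(\mathbb{Z}/2^m\mathbb{Z})^\times$ the Frobenius at $p$ corresponds to the class of $p$. Hence $p$ splits into exactly $g=\varphi(2^m)/f=2^{m-1}/f$ distinct prime ideals of $K_m$, where $f$ denotes the order of $p$ in $(\mathbb{Z}/2^m\mathbb{Z})^\times$. In particular $p$ decomposes into precisely two prime ideals if and only if $f=2^{m-2}$.

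First I would record the structure $(\mathbb{Z}/2^m\mathbb{Z})^\times\cong\langle -1\rangle\times\langle 5\rangle\cong\mathbb{Z}/2\mathbb{Z}\times\mathbb{Z}/2^{m-2}\mathbb{Z}$, valid for $m\geq 3$, in which $5$ has order $2^{m-2}$. Because the exponent of this group is $2^{m-2}$, one always has $f\leq 2^{m-2}$, so that $g\geq 2$ (no odd prime stays inert), and the condition $f=2^{m-2}$ says exactly that $p$ has maximal possible order. Writing $p\equiv (-1)^\epsilon 5^{j}\pmod{2^m}$ with $\epsilon\in\{0,1\}$ and $0\leq j<2^{m-2}$, the element $(-1)^\epsilon 5^{j}$ has order $2^{m-2}$ if and only if $5^{j}$ generates $\langle 5\rangle$, that is if and only if $j$ is odd; here the hypothesis $m\geq 4$ is used, so that $2^{m-2}\geq 4$ and the factor $\langle -1\rangle$ cannot affect whether the order is maximal.

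The last step is to read off this parity condition modulo $8$. Since $5^2\equiv 1\pmod 8$, one has $5^{j}\equiv 5\pmod 8$ when $j$ is odd and $5^{j}\equiv 1\pmod 8$ when $j$ is even; combining this with $-1\equiv 7\pmod 8$ yields $(-1)^\epsilon 5^{j}\equiv 3$ or $5\pmod 8$ exactly when $j$ is odd, and $(-1)^\epsilon 5^{j}\equiv 1$ or $7\pmod 8$ exactly when $j$ is even. Therefore $f=2^{m-2}$, equivalently $g=2$, holds if and only if $p\equiv 3$ or $5\pmod 8$, which is the desired equivalence.

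The only genuinely delicate point is the passage from the multiplicative order modulo $2^m$ to a congruence modulo the \emph{fixed} modulus $8$; this is what makes the criterion independent of $m$. It rests on the observation that reduction modulo $8$ already detects both the sign $\epsilon$ and the parity of $j$, hence whether $p$ lies in the maximal-order locus of $(\mathbb{Z}/2^m\mathbb{Z})^\times$, and that this locus is stable as $m$ grows.
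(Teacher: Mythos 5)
Your proof is correct, and it takes a genuinely different route from the paper's for the general case. Both proofs rest on the same decomposition law ($p$ unramified splits into $g=2^{m-1}/f$ primes, $f$ the order of $p$ in $(\ZZ/2^m\ZZ)^\times$), but the paper applies it only at the level $m=4$, tabulating $f$ and $g$ according to the class of $p$ modulo $16$, and then climbs the tower by induction: writing $K_{m+1}=K_m(\sqrt{\zeta_{2^m}})$ and checking $\left( \frac{\zeta_{2^m}}{\mathfrak p_i}\right)=-1$ shows that each of the two primes above $p$ stays inert at every step. You instead compute the Frobenius order uniformly for all $m$ from the structure $(\ZZ/2^m\ZZ)^\times\cong\langle -1\rangle\times\langle 5\rangle$, observing that $g=2$ forces $f=2^{m-2}$, that this happens exactly when the $\langle 5\rangle$-component of $p$ has odd exponent (here your use of $m\geq 4$ to neutralize the $\langle -1\rangle$ factor is exactly right, and is where the statement would fail for $m=3$, since $p\equiv 7\pmod 8$ also gives $g=2$ in $\QQ(\zeta_8)$), and that reduction modulo $8$ already detects this locus. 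Your argument is more self-contained and makes transparent why the criterion is independent of $m$; the paper's inductive step has the side benefit that the symbol computation $\left( \frac{\zeta_{2^m},p}{\mathfrak p}\right)=-1$ is precisely what gets reused later (in the proof of Theorem \ref{thm the parity} and in Lemma \ref{lm computations}). One cosmetic point: you assume $p$ is odd at the outset, while the statement allows $p=2$; since $2$ is totally ramified in $K_m$, both sides of the equivalence fail for $p=2$ and the claim holds vacuously, so nothing is lost.
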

  \begin{proof}
  	Let $p$ be a rational prime and  $p\mathcal{O}_{K_{4}}=\mathfrak p_1...\mathfrak p_g$ its factorization in $\mathcal{O}_{K_{4}}$. Denote by $f$ the residue degree of $p$ in $K_4$,
  	and by $k$ the positive integer  less  than $16$,  such that  $p\equiv k \pmod{16}$. Then,  by the theorem of  the cyclotomic reciprocity law (see   \cite[Theorem 2.13]{washington1997introduction}),  we have:
   	$${
  		\renewcommand{\arraystretch}{1.8}
  		\setlength{\tabcolsep}{0.5cm}
  		\begin{tabular}{|c|c|c|c|c|c|c|c|c|}
  		\hline
  		$k$	& 1 & 3  & 5  & 7  & 9 & 11 & 13&15 \\
  		\hline
  		$f$& 1&4 & 4 & 2 & 2 &4  & 4 &2  \\
  		\hline
  		$g$& 8&2 & 2 & 4 & 4 &2  & 2 &4  \\
  		\hline
  		\end{tabular} }
  	$$
  	It follows that the  rational primes  that decompose   into the product of two prime ideals of $K_4$   are exactly those which are congruent to $3$ or $5\pmod 8$. So a prime $p$ decomposes into the product of  two prime ideals of $K_m$,  is congruent to $3$ or $5\pmod 8$.
  	For the converse,  assume that  $p\equiv 3$ or $5\pmod 8$ and $p\mathcal{O}_{K_m}=\mathfrak{p}_1 \mathfrak{p}_2$,  for $m\geq4$.  As $K_{m+1}=K_m(\sqrt{\zeta_{2^{m}}})$,  then
  	$\left( \frac{\zeta_{2^{m}}}{\mathfrak p_i}\right)=\left( \frac{\zeta_{2^{m-1}}}{\mathfrak p_i}\right)=-1$. So the result comes by induction.
   \end{proof}


  Let us propose a new simple proof of the following well known result.

  	\begin{theorem}
  	For all $m\geq 2$,  the class number of $K_m=\mathbb Q(\zeta_{ 2^{m}})$ is odd and every unit of $K_m$ is a norm of   an element of $K_{m+1}$.
  \end{theorem}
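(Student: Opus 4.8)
The plan is to prove both assertions simultaneously by induction on $m$, using the ambiguous class number formula (Chevalley's formula, cf. \cite{Gr}) for the quadratic extension $K_{m+1}/K_m$ together with the fact that this tower ramifies at a single prime. The base case is $m=2$: since $K_2=\mathbb{Q}(i)$ has class number $1$, $h(K_2)$ is odd. For the inductive step I would assume $h(K_m)$ is odd and deduce both that every unit of $K_m$ is a norm from $K_{m+1}$ and that $h(K_{m+1})$ is again odd.

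First I would record that $K_{m+1}=K_m(\zeta_{2^{m+1}})=K_m(\sqrt{\zeta_{2^m}})$ is cyclic of degree $2$ over $K_m$; write $G=\mathrm{Gal}(K_{m+1}/K_m)$. The decisive ramification input is that the only prime of $K_m$ ramifying in $K_{m+1}$ is the unique prime above $2$: indeed $2$ is totally ramified in every $K_j/\mathbb{Q}$, so the prime above $2$ has relative ramification index $2$ in $K_{m+1}/K_m$, while every other finite prime is unramified (the relative discriminant is supported at $2$) and, $K_m$ being totally imaginary, there is no archimedean ramification. Substituting this into
\begin{equation*}
|\mathrm{Cl}(K_{m+1})^{G}|=\frac{h(K_m)\,\prod_v e_v}{[K_{m+1}:K_m]\,(E_{K_m}:E_{K_m}\cap N_{K_{m+1}/K_m}(K_{m+1}^\times))},
\end{equation*}
where $v$ runs over all places of $K_m$ and $e_v$ is the ramification index, gives $\prod_v e_v=2=[K_{m+1}:K_m]$, hence
\begin{equation*}
|\mathrm{Cl}(K_{m+1})^{G}|=\frac{h(K_m)}{(E_{K_m}:E_{K_m}\cap N_{K_{m+1}/K_m}(K_{m+1}^\times))}.
\end{equation*}

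To finish I would exploit two elementary facts. For a unit $u\in E_{K_m}$ one has $N_{K_{m+1}/K_m}(u)=u^2$, so $E_{K_m}^2\subseteq E_{K_m}\cap N_{K_{m+1}/K_m}(K_{m+1}^\times)$ and the unit index in the denominator is a power of $2$. Since the quotient above is an integer and $h(K_m)$ is odd by the induction hypothesis, this $2$-power index must equal $1$; this is precisely the statement that every unit of $K_m$ is a norm from $K_{m+1}$. Consequently $|\mathrm{Cl}(K_{m+1})^{G}|=h(K_m)$ is odd. Finally, $G$ is a $2$-group acting on the finite $2$-group $\mathrm{Cl}_2(K_{m+1})$, so were the latter nontrivial it would have nontrivial $G$-fixed points, forcing $|\mathrm{Cl}(K_{m+1})^{G}|$ to be even; hence $\mathrm{Cl}_2(K_{m+1})$ is trivial and $h(K_{m+1})$ is odd, which closes the induction.

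The hard part will be the ramification bookkeeping that pins down $\prod_v e_v=2$ — ensuring that no auxiliary finite prime ramifies and correctly accounting for the archimedean places — after which the argument is purely formal. As an independent route to the first assertion, I note that the $K_m$ are the layers of the cyclotomic $\mathbb{Z}_2$-extension of $K_2=\mathbb{Q}(i)$, in which the prime above $2$ is totally ramified, so that $h_2(K_2)=h_2(K_3)=1$ together with Theorem \ref{lm fukuda} already yields $h_2(K_m)=1$ for all $m\geq 2$.
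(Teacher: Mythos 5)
Your proof is correct and follows essentially the same route as the paper: induction on $m$ combined with the ambiguous class number formula for $K_{m+1}/K_m=K_m(\sqrt{\zeta_{2^m}})/K_m$, with the key input that the prime above $2$ is the unique ramified place. The only cosmetic difference is that the paper uses the rank form $rank_2(\mathrm{Cl}(K_{m+1}))=t_m-1-e_m=-e_m\geq 0$ to force $e_m=0$ at once, whereas you work with the order of the ambiguous class group and add a $2$-group fixed-point argument; both are valid instances of the same computation.
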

  \begin{proof}
  	 Note first that $K_{m+1}=K_{m}(\sqrt{\zeta_{2^{m}}})$. Suppose  $h( K_{m})$ is odd for some $m\geq 2$. As $K_{m+1}/K_{m}$ is quadratic extension,  so  the well known ambiguous class number formula (see \text{\cite{Gr}}) implies that
  	$rank_2(K_{m+1})=t_m-1-e_m,$
  	where $e_m$ is defined by $(E_{K_m}:E_{K_m}\cap N_{K_{m+1}/K_{m}}(K_{m+1}^*))=2^{ e_m}$ and $t_m$ is the number of  ramified primes    in $K_{m+1}/K_m$.  Since $2$ is the only rational prime that is ramified in $K_{m+1}$ and it is totally ramified
(in $K_{m+1}$),  hence $t_m=1$. Thus,  $rank_2(K_{m+1})=1-1-e_m=-e_m$. From which we deduce that  $e_m=0$ and $rank_2(K_{m+1})=0.$ So the result comes by induction.
  \end{proof}

  	\section{\textbf{The parity of the class number of the fields $L_{m, d} $}}
  In this section, we investigate  the parity of the  class number of fields $L_{m,  d} $ without  relying on results of Iwasawa theory.
  \begin{theorem}\label{thm the parity}
  	Let $d$  be an odd square-free integer and $m\geq3$  any integer. Then $h(L_{m,  d})$ is odd if and only if $d$ is a prime congruent to $3$ or $5\pmod 8$.
  \end{theorem}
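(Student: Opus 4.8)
The plan is to run the ambiguous class number formula for the quadratic extension $L_{m,d}/K_m$, which is legitimate because $h(K_m)$ is odd (the theorem just proved); exactly as in that proof one gets
$$rank_2(\mathrm{Cl}(L_{m,d}))=t-1-e_{m,d},$$
where $t$ is the number of primes of $K_m$ ramifying in $L_{m,d}$. By Proposition \ref{prop decoposition} the relative discriminant is $d\mathcal O_{L_{m,d}}$, so, $d$ being odd and $K_m$ totally imaginary, the ramified primes are precisely those dividing $d$; thus $t$ is the total number of primes of $K_m$ lying above the rational prime divisors of $d$. Since $h(K_m)$ is odd, $h(L_{m,d})$ is odd iff $\mathrm{Cl}_2(L_{m,d})=1$ iff this rank is $0$, i.e. iff $e_{m,d}=t-1$; and $e_{m,d}\le t-1$ always holds, by the product formula for the norm residue symbols at the primes dividing $d$. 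Everything therefore reduces to deciding when $e_{m,d}=t-1$. Note also that $i\in K_m$ gives $L_{m,d}=L_{m,-d}$, so I may assume $d>0$.

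I would first rule out composite $d$. Suppose $d$ has at least two prime factors and write $d=d_1d_2$ with $d_1,d_2>1$ coprime and squarefree. Each prime above $d_1$ ramifies in $L_{m,d}/K_m$, so $(d_1)$ is the square of an ideal of $\mathcal O_{L_{m,d}}$; and since $i\in K_m$, any odd integer is congruent to a square mod $4$ (to $1^2$ or to $i^2$), so $d_1$ meets both conditions of the ramification criterion \cite{d.hilbet}. Moreover $\sqrt{d_1}\notin L_{m,d}$: the only squarefree integers that become squares in $K_m(\sqrt d)$ are $1,\pm2,\pm d,\pm2d$ up to squares, and $d_1$ (odd, with $1<d_1<d$) is none of them. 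Hence $L_{m,d}(\sqrt{d_1})/L_{m,d}$ is unramified at all finite primes, and unramified at infinity as $L_{m,d}$ is totally complex; so it is a nontrivial unramified extension and $h(L_{m,d})$ is even. Thus an odd class number forces $d=p$ to be prime.

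For $d=p$ prime, $t=g$ is the number of primes of $K_m$ above $p$, and the symbols governing $e_{m,d}$ are tame. The crucial reduction is that for a unit $u$ and $\mathfrak P\mid p$ one has $\left(\frac{u,p}{\mathfrak P}\right)=\left(\frac{\overline u}{\mathfrak P}\right)$ (a quadratic residue in $\mathbb F_{p^f}$), and then, since the residue extension is cyclic generated by Frobenius, $\left(\frac{u,p}{\mathfrak P}\right)=\left(\frac{\overline{N_D(u)}}{p}\right)$, where $D$ is the decomposition group of $p$ and $N_D(u)=\prod_{\delta\in D}\delta(u)\in K_m^D$. Thus the symbol map factors through the partial norm $N_D\colon E_{K_m}\to E_{K_m^D}$. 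I would then split on complex conjugation $c$. If $c\in D$, then $N_D$ annihilates the roots of unity (as $-1\in D$ forces $\sum_{a\in D}a\equiv0\pmod{2^m}$) and sends real units into $(K_m^D)^2$; since $E_{K_m}=\langle\zeta_{2^m}\rangle E_{K_m^+}$ (the cyclotomic unit index is $1$), every $N_D(u)$ is a square in $K_m^D$, all symbols are trivial, $e_{m,d}=0$, and $h(L_{m,d})$ is even. If $c\notin D$, then $K_m^D$ is a CM field of degree $g$, so the image of $E_{K_m^D}$ in $\prod_{\mathfrak q\mid p}\mathbb F_p^*/(\mathbb F_p^*)^2$ has $\mathbb F_2$-dimension at most $\dim_{\mathbb F_2}(E_{K_m^D}/E_{K_m^D}^2)=g/2$; hence $e_{m,d}\le g/2\le t-2$ once $g\ge4$, and $h(L_{m,d})$ is again even. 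The only case left is $g=2$, which by Proposition \ref{prop deco to 2} is exactly $p\equiv3,5\pmod8$; there I would compute directly that $\zeta_{2^m}$ reduces to a non-square modulo the ramified prime—equivalently $v_2(p^{f}-1)=m$ with $f=2^{m-2}$, by lifting the exponent—so $\left(\frac{\zeta_{2^m},p}{\mathfrak P}\right)=-1$, giving $e_{m,d}=1=t-1$ and an odd class number. Lemma \ref{lemma 1 values of symbls p=3 mod 8} furnishes the base case $m=3$.

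The hard part is the $d=p$ analysis, and inside it the symbol computations. Establishing the Frobenius/partial-norm identity $\left(\frac{u,p}{\mathfrak P}\right)=\left(\frac{\overline{N_D(u)}}{p}\right)$ and, above all, proving that $N_D(u)$ is a square in $K_m^D$ whenever $c\in D$—which genuinely relies on the cyclotomic unit index being $1$—are the delicate points, together with the exact valuation $v_2(p^{f}-1)=m$ in the $g=2$ case. One must also carry along the decomposition data ($f$, $g=t$, and whether $-1\in\langle p\rangle$) and reconcile the boundary situations (small $m$, and $p\equiv7\pmod8$ with $p\equiv-1\pmod{2^m}$) with Proposition \ref{prop deco to 2}; it is this bookkeeping, rather than any single estimate, where most of the care is needed.
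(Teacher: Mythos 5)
Your proposal is correct, and for the essential case $d=p$ prime it takes a genuinely different route from the paper. The composite case is the same in substance: your hands-on construction of the unramified extension $L_{m,d}(\sqrt{d_1})/L_{m,d}$ is exactly what the paper's genus-field degree count $\prod_{p\mid d}e(p)=2\,[L_{m,d}^*:L_{m,d}]$ encodes. For prime $d=p$, however, the paper splits into four congruence classes mod $8$ and treats each by an ad hoc device: for $p\equiv 1\pmod 8$ it exhibits two explicit unramified quadratic extensions generated by $\sqrt{a+4bi}$ and $\sqrt{e+4f\sqrt2}$ with $p=a^2+16b^2=e^2-32f^2$ (which yields the extra information that $\mathrm{Cl}_2(L_{m,p})$ is non-cyclic); for $p\equiv 7\pmod 8$ it imports the case $m=3$ from \cite{chemsZkhnin1} and propagates it upward by Theorem \ref{thm h(k) divise h(L)}; and only for $p\equiv 3,5\pmod 8$ does it run the ambiguous class number formula, descending the symbol $\left(\frac{\zeta_{2^m},p}{\mathfrak p_{K_m}}\right)$ to $K_3$ and invoking Lemma \ref{lemma 1 values of symbls p=3 mod 8}. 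You instead give a uniform treatment organized by the decomposition group $D=\langle p\bmod 2^m\rangle$: the tame-symbol identity $\left(\frac{u,p}{\mathfrak P}\right)=\left(\frac{\overline{N_D(u)}}{p}\right)$, the unit index $Q=[E_{K_m}:\langle\zeta_{2^m}\rangle E_{K_m^+}]=1$ to kill all symbols when $c\in D$, and the bound $e_{m,d}\le\dim_{\mathbb F_2}(E_{K_m^D}/E_{K_m^D}^2)=g/2\le t-2$ when $c\notin D$ and $g\ge 4$, leaving only $g=2$ (i.e. $p\equiv 3,5\pmod 8$ by Proposition \ref{prop deco to 2}) for direct computation, where your lifting-the-exponent calculation $v_2(p^{2^{m-2}}-1)=m$ actually supersedes the induction on $m$ (and the appeal to Lemma \ref{lemma 1 values of symbls p=3 mod 8}) entirely. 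Your version buys self-containedness and uniformity at the price of more machinery ($Q=1$, the Frobenius/partial-norm identity); the paper's version buys structural by-products in the $p\equiv1$ case but leans on an external reference for $p\equiv7$. Two small points to make explicit in a write-up: in the $c\in D$ branch you conclude evenness from $\mathrm{rank}=t-1\ge1$, which needs $t=g\ge2$ — true because $(\ZZ/2^m\ZZ)^*$ is non-cyclic for $m\ge3$ while $D$ is cyclic; and the reduction to $d>0$ via $L_{m,d}=L_{m,-d}$ should be stated once and for all, as the theorem's "iff $d$ is a prime" is really a statement about $|d|$.
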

  \begin{proof}
  	Suppose  that $d$ is odd,  and denote by  $ L_{m, d}^*$,   $H_{m, d}$ the genus field and the Hilbert $2$-class field of $L_{m, d}$ respectively. It is  known  that:
  	$$[L_{m, d}^*:\mathbb{Q}]=\prod_{p|\delta_{L_{m, d}}}e(p)\quad \text{ and }\quad  \mathrm{Cl}_2(L_{m, d})=Gal(H_{m, d}/L_{m, d}), $$
  	where $e(p)$ is the ramification index of $p$ in $L_{m, d}$. So  $$[L_{m, d}^*:\mathbb{Q}]=\prod_{p|2d}e(p)=[L_{m, d}^*:L_{m, d}][L_{m, d}:\mathbb{Q}]=2^{m}[L_{m, d}^*:L_{m, d}].$$
  	Since $e(2)=2^{m-1}$ and $e(p)=2$ for any prime divisor $p$ of $d$,  we have
  	$$\prod_{p|d}e(p)=2.[L_{m, d}^*:L_{m, d}].$$
  	Hence,  if $d$ is not a prime,  then $L_{m, d}\subsetneq L_{m, d}^*\subseteq H_{m, d}$
  	and $h_2(L_{m, d})$ is even.
  	
  	Suppose now that $d=p$ is a prime. We distinguish the following four cases: \\
  	$\bullet$ Assume  $d=p\equiv 1\pmod 8$. Set $p=a^2+16b^2=e^2-32f^2$ and $\pi_1=a+4bi$,    $\pi_2= e+4f\sqrt{2}$. As the ramified primes of $K_m$ in $L_{m, d}$ are exactly the prime divisors  of  $p$ in $K_m$,  then the ideals of $L_{m, d}$ generated by $\pi_1$ and $\pi_2$ are squares of ideals of $L_{m, d}$. Note that as $a$ and $e$ are odd, then $a\equiv e \equiv \pm 1\equiv i^2\pmod 4$. It follows that the equation $\pi_j\equiv \xi^2\pmod 4$,  $j=1$ or $2$,  has a solution. So  	$L_1=L_{m, d}(\sqrt{\pi_1})$ and  $L_2=L_{m, d}(\sqrt{\pi_2})$  are two distinct unramified  quadratic extensions of $L_{m, d}$. Thus  $h(L_{m, d})$ is divisible by $4$.  Furthermore,   $\mathrm{Cl}_2(L_{m, d})$ is not trivial and not cyclic.\\
  	\noindent\begin{minipage}{10cm}
  		$\bullet$ Assume now   $d=p\equiv 7\pmod 8$.
  		We prove that $h(L_{m, p})$ is even for all $m\geq 3$ by induction on $m$. If $m=3$,  then $h(L_{3, p})$ is even by \cite[Theorem 4.4]{chemsZkhnin1}. Suppose that $h(L_{m, p})$ is even for some $m\geq 3$. We have
  		$\mathbb{Q}(\sqrt{-p})/\mathbb{Q}$ is unramified at $2$ and $\mathbb{Q}(\zeta_{2^{m}})/\mathbb{Q}$ is totally ramified at $2$,  then $L_{m+1, p}/L_{m, p}$ is a quadratic extension that is ramified at primes over $2$. So $h(L_{m, p})$ divides $h(L_{m+1, p})$,  by Theorem \ref{thm h(k) divise h(L)}. Hence,   $h(L_{m+1, p})$ is even.
  	\end{minipage}
  	\begin{minipage}{4cm}
  		{
  			\tiny
  			\begin{tikzpicture} [scale=1]
  			\node (Q)  at (0, -0.5) {$\mathbb Q$};
  			\node (Q1)  at (-0.3, 0.3) {$\mathbb Q(\sqrt{-p})$};
  			\node (Q2)  at (1.5, 0.5) {$\mathbb Q(\zeta_{2^{m}})$};
  			
  			\node (Q4)  at (0.5, 1.5) {$\mathbb Q(\sqrt{p}, \zeta_{2^{m}})$};
  			\node (Q5)  at (2, 1.5) {$\mathbb Q(\zeta_{2^{m+1}})$};
  			\node (Q7)  at (2, 2.5) {$\mathbb Q(\sqrt{p}, \zeta_{2^{m+1}})$};
  			
  			\draw (Q) --(Q1)  node[scale=0.4, midway, below right]{};
  			\draw (Q) --(Q2)  node[scale=0.4, midway, below right]{};
  			\draw (Q2) --(Q4)  node[scale=0.4, midway, below right]{};
  			\draw (Q4) --(Q7)  node[scale=0.4, midway, below right]{};
  			\draw (Q2) --(Q5)  node[scale=0.4, midway, below right]{};
  			\draw (Q5) --(Q7)  node[scale=0.4, midway, below right]{};
  			\draw (Q1) --(Q4)  node[scale=0.4, midway, below right]{};
  			
  			\end{tikzpicture}}
  	\end{minipage}
  	
  	\noindent$\bullet$ Assume that $d=p\equiv 5\pmod 8$. For $m\geq 3$,   we have $p$ decomposes  into the product of  two prime ideals of $K_m$,  denote by $\mathfrak p_{K_m}$  one of them (such that $\mathfrak p_{K_{m-1}}\subset \mathfrak p_{K_m}$).  Since  $\zeta_{2^{m}}^2=\zeta_{2^{{m-1}}}$,  so the minimal polynomial of $\zeta_{2^{m}}$ over $K_{{m-1}}$ is $X^2-\zeta_{2^{m-1}}$ and $N_{K_{m}/K_{m-1}}(\zeta_{2^{m}})=-\zeta_{2^{m-1}}$.
  	Then
  	$$\left( \frac{\zeta_{2^{m}}, p}{\mathfrak p_{K_m}}\right)=\left( \frac{-\zeta_{ 2^{m-1}}, p}{\mathfrak p_{K_{m-1}}}\right)=\left( \frac{\zeta_{2^{m-1}}, p}{\mathfrak p_{K_{m-1}}}\right)=...=\left( \frac{\zeta_{8}, p}{\mathfrak p_{K_3}}\right)=-1, $$
  	hence  $e_{m, d}\not=0$ and  $rank_2(Cl(L_{m+1, p}))=2-1-e_{m+1, p}=1-e_{m+1, p} =0$. Thus the 2-class group of $L_{m+1, p}$ is trivial and $h(L_{m, d})$ is odd.\\
  	$\bullet$ We treat the case   $d=p\equiv 3\pmod 8$,  similarly to the previous one and we show that $h(L_{m, d})$ is odd. Which achieves the proof.	
  \end{proof}

  \begin{remark}
  	Let  $d$ be a positive square-free integer,
  	$k_\infty$  the cyclotomic $\ZZ_2$-extension of $k=\mathbb{Q}(\sqrt{-1}, \sqrt{d})$,  $k_n$ the nth layer of $k_\infty/k$ and
  	$X_\infty=\varprojlim (\mathrm{Cl}_2(k_n))$,  thus $X_\infty=0$ if and only if $d=p$ is a prime such that $p\equiv 5$ or $3\pmod 8$.	
  \end{remark}

\section{\textbf{The rank of the $2$-class group  of the fields   $L_{m, d}$}}
  	Let $d$ be an odd composite square-free integer of prime divisors congruent to $3$ or $5\pmod 8$ and $m\geq3$   an integer. To state the main theorem of this section,  we need the following result.

  	\begin{lemma}\label{lm computations}
  	Let $m\geq 3$ be an integer and $d$  an odd composite square-free integer. Let $\mathfrak p_{K_{m}}$ denote a prime ideal of $K_m$ dividing $d$.
   	\begin{enumerate}[\rm 1.]
  		\item If  $d=p_1...p_r$,  such that  for all $i$, $p_i\equiv 5  \pmod{8}$ is a prime, then
  		$$\left( \frac{\zeta_{2^m}, d}{\mathfrak p_{K_{m}}}\right)=-1 \text{ and }\left( \frac{\xi_{k, m}, d}{\mathfrak p_{K_{m}}}\right)=1.$$
  		\item If  $d=p_1...p_r$,  such that   for all $i$, $  p_i\equiv 3  \pmod{8}$ is a prime, then
  		$$\left( \frac{\zeta_{2^m}, d}{\mathfrak p_{K_{m}}}\right)=-1 \text{ and }\left( \frac{\xi_{k, m}, d}{\mathfrak p_{K_{m}}}\right)=\left\{  \begin{array}{ccc}
  		-1,  &\text{ if } k\equiv \pm 3 \pmod 8& \\
  		1, & \text{ elsewhere.} &
  		\end{array} \right.$$
  		\item If $d=p_1...p_sp_{s+1}...p_r$,  such that $d$ is   not   prime,   $p_i\equiv 5\pmod{8}$ for  $1\leq i \leq s$ and $p_j\equiv 3\pmod{8}$ for $s+1\leq j \leq r$, then
  		$$\left( \frac{\zeta_{2^m}, d}{\mathfrak p_{K_{m}}}\right)=-1 \text{ and }\left( \frac{\xi_{k, m}, d}{ \mathfrak p_{K_m}}\right)=\left\{\begin{array}{ll}
  		-1,  & \text{ if }   p\equiv 3\pmod{8} \text{ and } k\equiv  \pm 3 \pmod 8\\
  		1,  &\text{ elsewhere,}
  		\end{array}\right.$$
  		where $p$ is the rational prime contained in $\mathfrak p_{K_{m}}$.
  	\end{enumerate}
  \end{lemma}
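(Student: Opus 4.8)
The plan is to strip the computation down to the single rational prime under $\mathfrak p_{K_m}$, and then to descend the norm residue symbols through the tower $K_m\supset\cdots\supset K_3=\mathbb{Q}(\zeta_8)$, where the needed values are already recorded in Lemma~\ref{lemma 1 values of symbls p=3 mod 8}. Let $p$ be the rational prime contained in $\mathfrak p_{K_m}$ and write $d=p\cdot(d/p)$, where $d/p$ is a $\mathfrak p_{K_m}$-unit. By bimultiplicativity,
$$\left(\frac{\alpha,d}{\mathfrak p_{K_m}}\right)=\left(\frac{\alpha,p}{\mathfrak p_{K_m}}\right)\left(\frac{\alpha,d/p}{\mathfrak p_{K_m}}\right)\qquad(\alpha\in\{\zeta_{2^m},\xi_{k,m}\}).$$
Since $\alpha$ and $d/p$ are both units at the odd prime $\mathfrak p_{K_m}$, the second factor is trivial, so every symbol depends only on $p$. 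In particular the mixed case (part~3) is an immediate consequence of the two homogeneous cases according to whether $p\equiv 3$ or $5\pmod 8$, which also accounts for the stated dependence on the prime $p$ contained in $\mathfrak p_{K_m}$.

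Next I would descend the tower. For $p\equiv 3,5\pmod 8$, Proposition~\ref{prop deco to 2} shows that $p$ splits into exactly two primes in each layer $K_j$ with $j\geq 4$ (and one checks the same directly in $K_3$); hence $\mathfrak p_{K_{j-1}}$ is inert in the quadratic extension $K_j/K_{j-1}$ and $\mathfrak p_{K_j}$ is the unique prime above it. Under this single-prime condition the norm residue symbol satisfies the projection formula
$$\left(\frac{\alpha,p}{\mathfrak p_{K_j}}\right)=\left(\frac{N_{K_j/K_{j-1}}(\alpha),p}{\mathfrak p_{K_{j-1}}}\right),\qquad \alpha\in K_j^*.$$
For $\alpha=\zeta_{2^j}$ one has $N_{K_j/K_{j-1}}(\zeta_{2^j})=-\zeta_{2^{j-1}}$, exactly as in the proof of Theorem~\ref{thm the parity}; for $\alpha=\xi_{k,j}$ the distribution relation $N_{K_j/K_{j-1}}(1-\zeta_{2^j}^{k})=1-\zeta_{2^{j-1}}^{k}$, valid for every odd $k$, gives $N_{K_j/K_{j-1}}(\xi_{k,j})=(-1)^{(1-k)/2}\xi_{k,j-1}$. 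In both cases the stray sign $-1$ is a square in $K_{j-1}$ because $\zeta_4\in K_{j-1}$, so $\left(\frac{-1,p}{\mathfrak p_{K_{j-1}}}\right)=1$ and it disappears from the symbol. Iterating from $K_m$ down to $K_3$ then yields
$$\left(\frac{\zeta_{2^m},p}{\mathfrak p_{K_m}}\right)=\left(\frac{\zeta_8,p}{\mathfrak p_{K_3}}\right),\qquad \left(\frac{\xi_{k,m},p}{\mathfrak p_{K_m}}\right)=\left(\frac{\xi_{k,3},p}{\mathfrak p_{K_3}}\right),$$
with $\xi_{k,3}=\zeta_8^{(1-k)/2}\tfrac{1-\zeta_8^{k}}{1-\zeta_8}$, the exponents being read modulo $8$.

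Finally I would evaluate the base case. The first identity together with Lemma~\ref{lemma 1 values of symbls p=3 mod 8} gives $\left(\frac{\zeta_{2^m},d}{\mathfrak p_{K_m}}\right)=-1$ in all three cases. For the cyclotomic unit, the key observation is that $\xi_{k,3}$ is in fact totally real: $\xi_{k,3}=\frac{\sin(k\pi/8)}{\sin(\pi/8)}\in\mathbb{Q}(\sqrt2)=K_3^+$. Hence modulo squares of $K_3$, where $-1$ is already a square, one has $\xi_{k,3}\equiv\varepsilon_2^{\epsilon(k)}$ with $\epsilon(k)=1$ exactly when $k\equiv\pm3\pmod 8$ (then $|\sin(k\pi/8)|=\sin(3\pi/8)$, so the ratio equals $\cot(\pi/8)=\varepsilon_2$) and $\epsilon(k)=0$ when $k\equiv\pm1\pmod 8$. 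Therefore $\left(\frac{\xi_{k,m},d}{\mathfrak p_{K_m}}\right)=\left(\frac{\varepsilon_2,p}{\mathfrak p_{K_3}}\right)^{\epsilon(k)}$, and Lemma~\ref{lemma 1 values of symbls p=3 mod 8} closes all three parts: this is $1$ for every $k$ when $p\equiv5\pmod8$, and equals $-1$ precisely for $k\equiv\pm3\pmod8$ when $p\equiv3\pmod8$. The main obstacle, and the place needing the most care, is the descent step: one must justify the projection formula under the inert single-prime hypothesis (including the layer $K_3$ not covered by Proposition~\ref{prop deco to 2}) and verify the cyclotomic norm relation for $\xi_{k,j}$ for all odd $k$, since this is precisely what transports the base-case dichotomy in $k\bmod 8$ up the entire tower.
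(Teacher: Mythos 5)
Your argument is correct and rests on the same skeleton as the paper's proof: strip the modulus down to the single rational prime $p$ under $\mathfrak p_{K_m}$ (both global units and $d/p$ are $\mathfrak p$-units at an odd prime, so that factor of the symbol is trivial), push the Hilbert symbols down the tower by norm functoriality using the fact that $p$ has exactly two primes in every layer $K_j$, $j\geq 3$, and finish at $K_3$ with Lemma \ref{lemma 1 values of symbls p=3 mod 8}. Where you genuinely diverge is at the two ends. First, by isolating $p$ at the outset you treat all three cases of the statement uniformly and get part 3 for free, whereas the paper runs cases 1 and 2 separately (descending to $K_2$ and invoking $\left(\frac{2}{p}\right)$ in case 1, and to $K_3$ in case 2) and dismisses case 3 as ``similar.'' Second, your base-case evaluation replaces the paper's four-way analysis of $1-\zeta_8^k$ for $k\equiv 1,3,5,7\pmod 8$ by the observation that $\xi_{k,3}=\sin(k\pi/8)/\sin(\pi/8)$ lies in $K_3^+=\mathbb{Q}(\sqrt2)$ and equals $\varepsilon_2^{\epsilon(k)}$ up to sign (a square in $K_3$), with $\epsilon(k)=1$ exactly for $k\equiv\pm3\pmod 8$; this makes the dichotomy in $k\bmod 8$ conceptual rather than computational and reduces everything to the single value $\left(\frac{\varepsilon_2,p}{\mathfrak p_{K_3}}\right)$ from Lemma \ref{lemma 1 values of symbls p=3 mod 8}. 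The cost is that you must justify two small facts the paper's brute-force computation sidesteps, namely the distribution relation $N_{K_j/K_{j-1}}(1-\zeta_{2^j}^k)=1-\zeta_{2^{j-1}}^k$ for odd $k$ and the identification $\xi_{3,3}=\cot(\pi/8)=1+\sqrt2=\varepsilon_2$; both are routine and you state them correctly, so I see no gap.
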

  \begin{proof}Denote by $\mathfrak p_K$ a prime ideal of a number field $K$ lying over $p$.
  	Each case needs special computations: {
  		\begin{enumerate}[\rm 1.]
  			\item Note that $N_{K_{m}/K_{m-1}}(\zeta_{2^{m}})=-\zeta_{2^{m-1}}$, so
  			$$\left( \frac{\zeta_{2^m}, d}{\mathfrak p_{K_{m}}}\right)=\left( \frac{\zeta_{2^m}, p}{\mathfrak p_{K_{m}}}\right)=\left( \frac{\zeta_{{2^{m-1}}}, p}{\mathfrak p_{{K_{m-1}}}}\right)=...=\left( \frac{\zeta_{8}, p}{\mathfrak p_{{K_{3}}}}\right)=-1,
  			 \text{ and }$$
  $$\left( \frac{1-\zeta_{{2^m}}^k, d}{\mathfrak p_{K_{m}}}\right)=\left( \frac{N_{K_m/K_{m-1}}(1-\zeta_{2^m}^k), d}{\mathfrak p_{{K_{m-1}}}}\right)=...=\left( \frac{1-\zeta_{8}^k, d}{\mathfrak p_{{K_{3}}}}\right)=\left( \frac{1-i^k, d}{\mathfrak p_{K_{2}}}\right).$$
  			
  Thus \\\begin{minipage}{14cm}
  				
  				\begin{eqnarray*}
  					\left( \frac{\xi_{k, m}, d}{\mathfrak p_{K_{m}}}\right)
  					&=&\left( \frac{\zeta_{2^m}^{(1-k)/2}, d}{\mathfrak p_{K_{m}}}\right)\left( \frac{\frac{1-\zeta_{2^m}^k}{1-\zeta_{2^m}}, d}{\mathfrak p_{K_{m}}}\right)\\
  					&=&(-1)^{(1-k)/2}\left( \frac{(1-\zeta_{2^m}^k)(1-\zeta_{2^m}), d}{\mathfrak p_{K_{m}}}\right)\\
  					&=&(-1)^{(1-k)/2}\left( \frac{1-\zeta_{2^m}^k, d}{\mathfrak p_{K_{m}}}\right)\left( \frac{1-\zeta_{2^m}, d}{\mathfrak p_{K_{m}}}\right)\\
  					&=& (-1)^{(1-k)/2}\left( \frac{1-i^k, d}{\mathfrak p_{K_{2}}}\right)\left( \frac{1-i, d}{\mathfrak p_{K_{2}}}\right)\\
  					&=& \left\{\begin{array}{cc}
  						-\left( \frac{1+i, d}{\mathfrak p_{K_{2}}}\right)\left( \frac{1-i, d}{\mathfrak p_{K_{2}}}\right)&\text{ if } k\equiv 3 \pmod4 \\
  						\left( \frac{1-i, d}{\mathfrak p_{K_{2}}}\right)\left( \frac{1-i, d}{\mathfrak p_{K_{2}}}\right)&\text{elsewhere}
  					\end{array} \right.\\
  					&=& \left\{\begin{array}{ll}
  						-\left( \frac{2, d}{\mathfrak p_{K_{2}}}\right)=-\left( \frac{2, p}{\mathfrak p_{K_{2}}}\right)=-\left( \frac{2}{ p}\right)&\text{ if } k\equiv 3 \pmod 4 \\
  						1&\text{elsewhere }
  					\end{array} \right.\\
  					&=&1.	
  			\end{eqnarray*}\end{minipage}
  			\item As in the previous case,  we have 	$\left( \frac{\zeta_{2^m}, d}{\mathfrak p_{K_{m}}}\right)=-1$ and:	\begin{eqnarray}\left( \frac{\xi_{k, m}, d}{\mathfrak p_{K_{m}}}\right)\nonumber
  			&=&\left( \frac{\zeta_{2^m}^{(1-k)/2}, d}{\mathfrak p_{{K_{m}}}}\right)\left( \frac{\frac{1-\zeta_{2^m}^k}{1-\zeta_{2^m}}, d}{\mathfrak p_{{K_{m}}}}\right)\nonumber\\
  			&=&(-1)^{(1-k)/2}\left( \frac{(1-\zeta_{2^m}^k)(1-\zeta_{2^m}), d}{\mathfrak p_{{K_{m}}}}\right)\nonumber\\
  			&=&(-1)^{(1-k)/2}\left( \frac{1-\zeta_{2^m}^k, d}{\mathfrak p_{{K_{m}}}}\right)\left( \frac{1-\zeta_{2^m}, d}{\mathfrak p_{{K_{m}}}}\right)\nonumber\\
  			&=&(-1)^{(1-k)/2} \left( \frac{1-\zeta_{8}^k, d}{\mathfrak p_{{K_{3}}}}\right)\left( \frac{1-\zeta_{8}, d}{\mathfrak p_{{K_{3}}}}\right)\label{eq 5}\\
  			&=&(-1)^{(3-k)/2}\left( \frac{\zeta_{8}^{-1}, d}{\mathfrak p_{{K_{3}}}}\right) \left( \frac{1-\zeta_{8}^k, d}{\mathfrak p_{{K_{3}}}}\right)\left( \frac{1-\zeta_{8}, d}{\mathfrak p_{{K_{3}}}}\right)\nonumber\\
  			&=& \left\{  \begin{array}{cc}
  			\left( \frac{ \varepsilon_2, p}{\mathfrak p_{K_3}}\right), &\hspace{-1.55cm}\text{ if } k\equiv 3 \pmod 8 \\
  			\left( \frac{1+\zeta_{8}, p}{\mathfrak p_{{K_{3}}}}\right)\left( \frac{1-\zeta_{8}, p}{\mathfrak p_{{K_{3}}}}\right), &\text{ if } k\equiv 5 \pmod 8 \;\;\;(\text{see  (\ref{eq 5})})\nonumber \\
  			-\left( \frac{1-\zeta_{8}^{-1}, p}{\mathfrak p_{{K_{3}}}}\right)\left( \frac{1-\zeta_{8}, p}{\mathfrak p_{{K_{3}}}}\right), &\text{ if } k\equiv 7 \pmod 8 \;\;\;(\text{see  (\ref{eq 5})})\\
  			\left( \frac{1-\zeta_{8}, p}{\mathfrak p_{{K_{3}}}}\right)\left( \frac{1-\zeta_{8}, p}{\mathfrak p_{{K_{3}}}}\right), &\text{ if } k\equiv 1 \pmod 8 \;\;\;(\text{see  (\ref{eq 5})})
  			\end{array} \right.
  			\end{eqnarray}
  			\begin{eqnarray}
  			\hspace{2cm}	&=& \left\{  \begin{array}{ccc}
  			-1,  &\hspace{-0.3cm}\text{ if }  k\equiv 3 \pmod 8& \;\;\;\;\;\;(\text{see Lemma \ref{lemma 1 values of symbls p=3 mod 8}}) \\
  			\left( \frac{1-i, p}{\mathfrak p_{{K_{3}}}}\right), &\hspace{-0.3cm}\text{ if }  k\equiv 5 \pmod 8&  \\
  			-\left( \frac{(1-\zeta_{8}^{-1})(1-\zeta_{8}), p}{\mathfrak p_{{K_{3}}}}\right), &\text{ if } k\equiv 7 \pmod 8 \;\;\;\\
  			1, &\text{ if } k\equiv 1 \pmod 8 \;\;\;
  			\end{array} \right.\nonumber
  			\end{eqnarray}
  			\begin{eqnarray*}
  				\hspace{-2cm}	&=& \left\{  \begin{array}{ccc}
  					-1,  &\text{ if } k\equiv 3 \pmod 8& \\
  					\left( \frac{1-i, p}{\mathfrak p_{{K_{2}}}}\right)=\left( \frac{2}{ p}\right), &\text{ if } k\equiv 5 \pmod 8&  \\
  					-\left( \frac{2-\sqrt{2}, p}{\mathfrak p_{{K_{3}}}}\right)=-\left( \frac{2-\sqrt{2}, p}{\mathfrak p_{{\mathbb{Q}(\sqrt2)}}}\right)=-\left( \frac{2}{ p}\right), &\text{ if } k\equiv 7 \pmod 8 &\\
  					1, &\text{ if } k\equiv 1 \pmod 8
  				\end{array} \right.\\
  				&=&\left\{  \begin{array}{clc}
  					-1,  &\text{ if } k\equiv \pm3 \pmod 8, & \\
  					1, &\text{ elsewhere. }   &
  				\end{array} \right.\end{eqnarray*}
  			\item We similarly prove the third  assertion.

  	\end{enumerate} }
  \end{proof}
	\begin{remark}
		Keep the above hypothesis. We have
	\begin{enumerate}[\rm 1.]
		\item $\zeta_{2^m}$ is not a norm in $L_{m, d}/K_m$.
		\item 		$\xi_{k, m}$ is not a norm in $L_{m, d}/K_m$ if and only if  $d$ is divisible by a prime integer congruent to $3\pmod 8$ and $ k\equiv \pm3 \pmod 8$.
	\end{enumerate}
\end{remark}

Now we are able to prove the main result of this section.
\begin{theorem}\label{thm rank}
	Let $d=p_1...p_r$ be an odd composite square-free integer  such that every prime divisor $p_i$ of $d$ is congruent to $3$ or $5$ $\pmod 8$ and $m\geq 3$ is an integer. Then the rank of the $2$-class group of
	$L_{m, d}$ is $2r-2$ or $2r-3$. More precisely,
	$rank_2(Cl(L_{m, d}))=2r-2$ if and only if all   the prime divisors of $d$ are in the same coset $\pmod 8$.
\end{theorem}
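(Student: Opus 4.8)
The plan is to apply the ambiguous class number formula to the quadratic extension $L_{m,d}/K_m$. Since the base field $K_m$ has odd class number, this formula (cf. \cite{Gr}, in the form already used above for $K_{m+1}/K_m$) gives
$$rank_2(Cl(L_{m,d}))=t_{m,d}-1-e_{m,d},$$
where $t_{m,d}$ is the number of primes of $K_m$ that ramify in $L_{m,d}$ and $e_{m,d}$ is the integer from the notations. I would first pin down $t_{m,d}$. By Proposition \ref{prop decoposition} the relative discriminant of $L_{m,d}/K_m$ equals $d\mathcal O_{L_{m,d}}$, so the ramified primes are exactly the primes of $K_m$ dividing $d$; and by Proposition \ref{prop deco to 2} (for $m=3$ one checks directly from the cyclotomic reciprocity law that a prime $\equiv3,5\pmod8$ also splits into two primes of $K_3$) each $p_i$ decomposes into exactly two primes of $K_m$, both ramifying in $L_{m,d}$. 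As $K_m$ and $L_{m,d}$ are totally complex there is no ramification at infinity, so $t_{m,d}=2r$ and it remains to show that $e_{m,d}=1$ when the $p_i$ all lie in one residue class modulo $8$, and $e_{m,d}=2$ otherwise.

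To evaluate $e_{m,d}$ I would invoke Hasse's norm theorem for the cyclic extension $L_{m,d}/K_m$: a unit of $K_m$ is a norm from $L_{m,d}$ if and only if it is a local norm at every place. Since $K_m$ is totally complex and $2\nmid d$ is unramified in $L_{m,d}/K_m$, every unit is automatically a local norm at the infinite places and at every prime not dividing $d$ (in an unramified quadratic extension of local fields the units are norms). Hence the quadratic norm residue symbols at the $2r$ primes above $d$ induce an injection
$$E_{K_m}/\bigl(E_{K_m}\cap N(L_{m,d})\bigr)\hookrightarrow\prod_{\mathfrak p\mid d}\{\pm1\}\cong\mathbb F_2^{\,2r},$$
so $e_{m,d}$ is exactly the $\mathbb F_2$-dimension of the image. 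Being quadratic, the symbols factor through $E_{K_m}/E_{K_m}^2$; and because the cyclotomic units of $K_m$ have odd index in $E_{K_m}$ (the real class number $h(K_m^+)$ divides the odd number $h(K_m)$), the generators $\zeta_{2^m}$ and $\xi_{k,m}$ of Lemma \ref{lm cyclo units} already span $E_{K_m}/E_{K_m}^2$. Thus $e_{m,d}$ equals the dimension of the $\mathbb F_2$-span of the symbol vectors of $\zeta_{2^m}$ and of the $\xi_{k,m}$, which is precisely the data computed in Lemma \ref{lm computations}.

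The theorem then follows by a short linear-algebra argument, using that the symbols in Lemma \ref{lm computations} depend only on the rational prime beneath $\mathfrak p$, so that the two primes above a given $p_i$ contribute identical coordinates. In every case $\zeta_{2^m}$ has symbol $-1$ at all $\mathfrak p\mid d$, so its image is the all-ones vector $v_0$ and $e_{m,d}\geq1$. If all $p_i\equiv5\pmod8$, every $\xi_{k,m}$ is a norm (the Remark after Lemma \ref{lm computations}), so the image is $\langle v_0\rangle$ and $e_{m,d}=1$; if all $p_i\equiv3\pmod8$, then $\xi_{k,m}$ has symbol $-1$ at every $\mathfrak p\mid d$ when $k\equiv\pm3\pmod8$, so its image is again $v_0$ and still $e_{m,d}=1$. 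In the mixed case, $\xi_{3,m}$ (available since $1<3<2^{m-1}$ for $m\geq3$) has symbol $-1$ at the primes over the divisors $\equiv3\pmod8$ and $+1$ at those over the divisors $\equiv5\pmod8$; this vector is independent of $v_0$, so $e_{m,d}=2$. Substituting into the formula yields $rank_2(Cl(L_{m,d}))=2r-2$ exactly when all prime divisors of $d$ share a coset modulo $8$, and $2r-3$ otherwise.

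The genuinely delicate point is the evaluation of the norm residue symbols underlying $e_{m,d}$, which is exactly the content of Lemma \ref{lm computations}: its proof pushes each symbol down the tower $K_m\supset\cdots\supset K_3$ via $N_{K_j/K_{j-1}}(\zeta_{2^j})=-\zeta_{2^{j-1}}$ and then reads off the value at $K_3$ from Lemma \ref{lemma 1 values of symbls p=3 mod 8}. Granting that lemma and the Remark, the only remaining work is the bookkeeping above—checking that the symbol is the same for both primes over each $p_i$, and that units are local norms at the primes above $2$—so that exactly the $2r$ ramified-prime symbols determine $e_{m,d}$.
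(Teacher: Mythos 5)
Your proof is correct and reaches the same key reduction as the paper --- the ambiguous class number formula gives $rank_2(Cl(L_{m,d}))=2r-1-e_{m,d}$, and $e_{m,d}\in\{1,2\}$ is read off from the norm residue symbols of $\zeta_{2^m}$ and the $\xi_{k,m}$ computed in Lemma \ref{lm computations} --- but you handle the passage to large $m$ by a genuinely different route. The paper restricts first to $m\in\{3,4,5\}$, where $h(K_m)=1$ by Masley--Montgomery \cite{masley}, so that by \cite[Theorem 8.2]{washington1997introduction} the cyclotomic units of Lemma \ref{lm cyclo units} are literally all of $E_{K_m}$; it then extends the conclusion to every $m\geq 3$ by viewing the $L_{m,d}$ as layers of the cyclotomic $\mathbb{Z}_2$-extension of $\mathbb{Q}(\sqrt{d},\sqrt{-1})$ and invoking Fukuda's stabilization theorem (Theorem \ref{lm fukuda}). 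You instead observe that only $E_{K_m}/E_{K_m}^2$ matters, and that the cyclotomic units span this quotient for \emph{every} $m$ because their index equals $h(K_m^+)$, which is odd (it divides the odd number $h(K_m)$ via Theorem \ref{thm h(k) divise h(L)}, since $K_m/K_m^+$ is ramified). This makes the argument uniform in $m$ and dispenses with the Iwasawa-theoretic step entirely --- a real simplification, given that the introduction presents the undetermined unit group of $K_m$ for $m\geq 7$ as the obstacle motivating the use of Fukuda's theorem. The price is that you must justify the index formula for the full (not just real) cyclotomic field, i.e.\ that $E_{K_m}$ is generated modulo squares by $\zeta_{2^m}$ and $E_{K_m^+}$ (this uses $Q=[E_{K_m}:\mu_{K_m}E_{K_m^+}]=1$ for prime-power cyclotomic fields); your parenthetical justification is compressed but sound. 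Your explicit linear-algebra bookkeeping over $\mathbb{F}_2^{2r}$, including the verification via Hasse's norm theorem that the unramified places and the infinite places contribute nothing, is also somewhat more detailed than the paper's, which simply lists the quotient $E_{K_m}/(E_{K_m}\cap N(L_{m,d}))$ in each case.
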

\begin{proof}The ring of integers of
	$K_m$ is principal  for  $m\in\{3, 4, 5\}$ (see \cite{masley}). So $h(K_m^+)=1$. By  \cite[Theorem 8.2]{washington1997introduction} and Lemma \ref{lm cyclo units},  the unit group $E_{K_m}$ of $K_m$ is generated by  $\zeta_{ 2^{m}}$ and $\xi_{k, m}=\zeta_{2^m}^{(1-k)/2}\frac{1-\zeta_{2^m}^k}{ 1-\zeta_{2^m}}, $ where $k$ is an odd integer such that  $1< k< 2^{m-1}$. So by the ambiguous class number formula (see \text{\cite{Gr}}) and Proposition \ref{prop decoposition},
	we have  $rank_2(Cl(L_{m, d}))=2r-1-e_{m, d}$.  Let $p$ be a prime divisor of $d$ and $ \mathfrak p_{K_m}$ a prime ideal of $K_m$ lying over $p$.
	If all the prime divisors of $d$ are in the same coset $\pmod 8$,    then by Lemma \ref{lm computations} it is easy to see that
	$E_{K_m}/(E_{K_m}\cap N(L_{m, d}))=\{\overline{1}, \overline{\zeta_{2^m}}\}$. Hence $e_{m, d}=1$.
	
	 Suppose now that   the prime divisors of $d$ are not in the same coset $\pmod 8$. By Lemma \ref{lm computations},  we have $\xi_{k, m}$ is a norm in $L_{m, d}/K_m$,  for all $k\equiv\pm1\pmod8$.

	Let $k\not=k'$ be two odd positive integers such that   $1< k, k'< 2^{m-1}$ and $k, k'\not\equiv\pm1 \pmod 8$. Again by Lemma \ref{lm computations},   we have:
	$$\left( \frac{\xi_{k, m}\xi_{k', m}, d}{\mathfrak p_{K_m}}\right)=1 \text{ for all $\mathfrak p_{K_m}$ of $K_m$,   } $$
	and:
	$$\left( \frac{\zeta_{ 2^{m}}\xi_{k, m}, d}{\mathfrak p_{\mathfrak p_{K_3}}}\right)=-1\; \text{ if } \mathfrak p_{K_m} \text{ is lying over $p\equiv 5\pmod{8}$}.$$
	So $\overline{\xi_{k, m}}=\overline{\xi_{k', m}}$ and $\overline{\xi_{k, m}}\not=\overline{\zeta_{ 2^{m}}}$  in $E_{K_m}/(E_{K_m}\cap N(L_{m, d}))$. Thus $E_{K_m}/(E_{K_m}\cap N(L_{m, d}))= \{\overline{1}, \overline{\zeta_{ 2^{m}}}, \overline{\xi_{k, m}}, \overline{\zeta_{ 2^{m}}\xi_{k, m}}\}$.  Hence   $e_{m, d}=2$. So we have the theorem for $m\in\{3, 4, 5\}$.
	Let $\pi_1= 2$,  $\pi_2=2+\sqrt{2}$, ...,  $\pi_m=2+\sqrt{\pi_{m}}$. Set  $\mathds{k}=\mathbb{Q}(\sqrt{d},  \sqrt{-1})$ and
	$\mathds{k}_1=\mathds{k}(\sqrt{\pi_1})=L_{3, d}$,  $\mathds{k}_2=\mathds{k}(\sqrt{\pi_2})=L_{2, d}$, ...,  $\mathds{k}_m=\mathds{k}(\sqrt{\pi_m})=L_{m, d}$. Thus,  the cyclotomic $\mathbb{Z}_2$-extension   $\mathds{k}_{\infty}$ of $\mathds{k}$ is given by $\cup_{m=0}^{\infty}\mathds{k}_m$. As we have proved Theorem \ref{thm rank} for
	the three layers $\mathds{k}_1$,  $\mathds{k}_2$ and $\mathds{k}_3$,  then Theorem \ref{lm fukuda} achieves the proof.		
\end{proof}

By the previous results, it is easy to get the following interesting theorem.
 \begin{theorem}
 	Let $d$ be  an odd square-free integer and $m\geq 3$ an integer. Suppose that $d$ is not a prime congruent to $7\pmod 8$. Then
 	$\mathrm{Cl}_2(L_{m, d})$ is  cyclic non-trivial if and only if $d=pq$ with $p\equiv 5\pmod 8$ and $q\equiv 3\pmod 8$.
 \end{theorem}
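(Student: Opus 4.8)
The plan is to recast the statement as a rank computation: a finite abelian $2$-group is cyclic and non-trivial exactly when its $2$-rank is $1$, so I must determine precisely when $rank_2(\mathrm{Cl}(L_{m,d}))=1$. The ``if'' direction is immediate from Theorem~\ref{thm rank}: if $d=pq$ with $p\equiv5$ and $q\equiv3\pmod8$ then $r=2$ and the two prime divisors lie in distinct cosets modulo $8$, whence $rank_2(\mathrm{Cl}(L_{m,d}))=2r-3=1$. For the converse I would argue by elimination over the shapes of $d$, using throughout that in the cyclotomic $\mathbb{Z}_2$-extension $(L_{m,d})_{m\ge2}$ of $\mathds{k}=\mathbb{Q}(\sqrt d,\sqrt{-1})=L_{2,d}$ the prime above $2$ is totally ramified; hence the norm maps $\mathrm{Cl}_2(L_{m+1,d})\to\mathrm{Cl}_2(L_{m,d})$ are surjective (this is exactly what realises $X_\infty=\varprojlim\mathrm{Cl}_2(L_{m,d})$ as an inverse limit), so $rank_2(\mathrm{Cl}(L_{m,d}))$ is non-decreasing in $m$ and it suffices to bound it at the first layer $m=3$.

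First I would dispose of $d$ prime. By Theorem~\ref{thm the parity} the rank is $0$ when $d\equiv3,5\pmod8$; the case $d\equiv7\pmod8$ is excluded by hypothesis; and the proof of Theorem~\ref{thm the parity} produces two independent unramified quadratic extensions when $d\equiv1\pmod8$, so the rank is $\ge2$ there. Thus no prime $d$ gives rank $1$. For composite $d=p_1\cdots p_r$ I would run the ambiguous class number formula at $m=3$ exactly as in the proof of Theorem~\ref{thm rank}: since $h(K_3)=1$ and, by Lemma~\ref{lm cyclo units}, $E_{K_3}$ is generated modulo squares by $\zeta_8$ and $\varepsilon_2$, one has $rank_2(\mathrm{Cl}(L_{3,d}))=t-1-e_{3,d}$ with $e_{3,d}\le2$, where, writing $g_p$ for the number of primes of $K_3$ above $p$, $t=\sum_{p\mid d}g_p$. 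From the splitting of $p$ in $K_3$ (cf.\ Proposition~\ref{prop deco to 2}) every odd prime has $g_p\ge2$, and $g_p=4$ when $p\equiv1\pmod8$; hence $t\ge2r$ and $rank_2(\mathrm{Cl}(L_{3,d}))\ge2r-3$. This forces rank $\ge2$ (so non-cyclic) whenever $r\ge3$, and whenever some $p_i\equiv1\pmod8$ (then $t\ge4+2(r-1)\ge6$ and the rank is $\ge3$); by monotonicity these bounds persist for all $m\ge3$.

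This reduces everything to $r=2$ with both primes in $\{3,5,7\}\pmod8$. When both are $\equiv3$ or $5$, Theorem~\ref{thm rank} gives rank $2$ (same coset) or $1$ (distinct cosets), so rank $1$ forces one prime $\equiv3$ and one $\equiv5$, i.e.\ $d=pq$ as claimed. The remaining case $d=p_1p_2$ with a factor $p_1\equiv7\pmod8$ is the step I expect to be the main obstacle, since Lemma~\ref{lm computations} is stated only for primes $\equiv3,5$. Here I would compute the two missing norm residue symbols directly. At a prime $\mathfrak p\mid p_1$ the localisation of $L_{3,d}/K_3$ is the ramified quadratic extension $K_{3,\mathfrak p}(\sqrt{p_1})$ (the unit $p_2$ contributing trivially), so for a unit $u$ the symbol $\left(\frac{u,d}{\mathfrak p}\right)$ is the quadratic residue symbol of $u$ in the residue field $\mathbb{F}_{p_1^{2}}$ (recall $g_{p_1}=2$, so $f=2$ for $p_1\equiv7$). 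Since $p_1^{2}\equiv1\pmod{16}$, the primitive $8$th root $\zeta_8$ is a square in $\mathbb{F}_{p_1^{2}}^{*}$; and as $\left(\frac{2}{p_1}\right)=1$ one has $\varepsilon_2\in\mathbb{F}_{p_1}\subset\mathbb{F}_{p_1^{2}}$, which is automatically a square in $\mathbb{F}_{p_1^{2}}^{*}$. Therefore
$$\left(\frac{\zeta_8,\,d}{\mathfrak p}\right)=\left(\frac{\varepsilon_2,\,d}{\mathfrak p}\right)=1\qquad\text{for every }\mathfrak p\mid p_1.$$

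With these values the rest is bookkeeping. The contributions of $\zeta_8$ and $\varepsilon_2$ to the norm residue map are now supported only on the two primes above $p_2$, where Lemma~\ref{lemma 1 values of symbls p=3 mod 8} (through Lemma~\ref{lm computations}) gives, for both such primes, $\left(\frac{\zeta_8,d}{\mathfrak p}\right)=-1$ and $\left(\frac{\varepsilon_2,d}{\mathfrak p}\right)=\mp1$ according as $p_2\equiv3$ or $5$, and both symbols $=1$ when $p_2\equiv7$. In each case the images of $\overline{\zeta_8}$ and $\overline{\varepsilon_2}$ lie in the one-dimensional space spanned by the vector supported on the $p_2$-primes, so $e_{3,d}\le1$ and $rank_2(\mathrm{Cl}(L_{3,d}))=4-1-e_{3,d}\ge2$; by the monotonicity above this persists for all $m\ge3$, ruling out cyclicity. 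Collecting the cases yields rank $1$ iff $d=pq$ with $p\equiv5,\ q\equiv3\pmod8$, as asserted. As an alternative to invoking monotonicity one could, just as in the proof of Theorem~\ref{thm rank}, compute the layers $m=3,4,5$ and conclude with Fukuda's Theorem~\ref{lm fukuda}.
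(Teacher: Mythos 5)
Your proof is correct, but for the converse direction it takes a genuinely different route from the paper's. The paper settles the whole classification at the bottom layer in one stroke by citing \cite[Theorem 5.5]{chemsZkhnin1}, which says that $\mathrm{Cl}_2(L_{3,d})$ is cyclic non-trivial exactly when $d=q\equiv 7\pmod 8$ is prime or $d=pq$ with $p\equiv 5$, $q\equiv 3\pmod 8$; it then concludes with the same monotonicity $rank_2(\mathrm{Cl}(L_{m,d}))\geq rank_2(\mathrm{Cl}(L_{3,d}))$ that you invoke, together with Theorem \ref{thm rank} (for the ``if'' direction) and Theorem \ref{thm the parity} (to exclude the possibility that $\mathrm{Cl}_2(L_{3,d})$ is trivial while $\mathrm{Cl}_2(L_{m,d})$ is not). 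You instead re-derive the $m=3$ classification from the paper's own toolkit: the count $t=\sum_{p\mid d}g_p\geq 2r$ (with $g_p=4$ for $p\equiv 1\pmod 8$) in the ambiguous class number formula eliminates $r\geq 3$ and any factor $\equiv 1\pmod 8$; Theorem \ref{thm the parity} handles $d$ prime; Theorem \ref{thm rank} handles two factors both $\equiv 3$ or $5\pmod 8$; and for the one configuration Lemma \ref{lm computations} does not cover (a factor $\equiv 7\pmod 8$) you supply the missing local computation, correctly: at a tamely ramified prime a unit is a norm iff it is a square in the residue field $\mathbb{F}_{p^2}$, and since $16\mid p^2-1$ for $p\equiv 7\pmod 8$ and $\varepsilon_2\in\mathbb{F}_p\subset\mathbb{F}_{p^2}$, both $\zeta_8$ and $\varepsilon_2$ give trivial symbols there, forcing $e_{3,d}\leq 1$ and rank $\geq 3-e_{3,d}\geq 2$ (consistent with the product formula, since the remaining symbols are equal at the two conjugate primes above the other factor). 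What each approach buys: the paper's proof is two lines but rests entirely on an external reference, whereas yours is self-contained within this paper and, as a by-product, extends the symbol computations of Lemma \ref{lm computations} to prime divisors $\equiv 7\pmod 8$.
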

 \begin{proof}
 	In fact,  by \cite[Theorem 5.5]{chemsZkhnin1} we have
 	$\mathrm{Cl}_2(L_{3, d})$  is cyclic non-trivial if and only if $d$  has one of the following forms:
 	\begin{enumerate}[\rm1.]
 		\item $d= q\equiv 7\pmod 8$ is a prime integer.
 		\item $d=qp$,  where   $q\equiv 3\pmod 8$ and $p\equiv 5\pmod 8$ are prime integers.
 	\end{enumerate}
 	Since   $rank_2(\mathrm{Cl}_2(L_{m, d}))\geq  rank_2(\mathrm{Cl}_2(L_{3, d})),  $ then  we get the result by the previous theorem.
 \end{proof}

 \begin{corollary}\label{cor 4.5}
 	Let $d$ be  an odd square-free integer and $m\geq 3$. Suppose that $d$ is not a prime congruent to $7\pmod 8$. Let
 	$k_\infty$ be the cyclotomic $\mathbb{Z}_2$-extension of $k=\mathbb{Q}(\sqrt{-1}, \sqrt{d})$,  $k_n$ the nth layer of $k_\infty/k$ and
 	$X_\infty=\varprojlim (\mathrm{Cl}_2(k_n))$. Thus
 	\begin{enumerate}[\rm 1.]
 		\item $X_\infty$ is cyclic if and only if,  $d=pq$ with $p\equiv 5\pmod 8$ and $q\equiv 3\pmod 8$.
 		\item If $d=pq$ with $p\equiv 5\pmod 8$ and $q\equiv 3\pmod 8$,  then the Iwasawa  $\lambda$-invariant of $k$ equals $0$ or $1$.
 	\end{enumerate}	
 \end{corollary}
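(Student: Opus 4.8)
The plan is to push the whole question down to the finite layers, where the preceding theorem already settles cyclicity, and then read off the $\lambda$-invariant from the elementary structure of cyclic $\mathbb{Z}_2$-modules. First I would identify the layers explicitly: the $n$-th layer is $k_n=k\cdot\mathbb{Q}_n$, where $\mathbb{Q}_n=\mathbb{Q}(\zeta_{2^{n+2}})^+$ is the $n$-th layer of the cyclotomic $\mathbb{Z}_2$-extension of $\mathbb{Q}$; since $\mathbb{Q}(\sqrt{-1})\cdot\mathbb{Q}(\zeta_{2^{n+2}})^+=\mathbb{Q}(\zeta_{2^{n+2}})$, this gives $k_n=\mathbb{Q}(\zeta_{2^{n+2}},\sqrt d)=L_{n+2,d}$, hence $\mathrm{Cl}_2(k_n)=\mathrm{Cl}_2(L_{n+2,d})$ for every $n$. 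The only primes ramifying in $k_\infty/k$ lie above $2$ and are totally ramified, so Theorem \ref{lm fukuda} applies with $n_0=0$; moreover the norm maps $\mathrm{Cl}_2(k_{n+1})\to\mathrm{Cl}_2(k_n)$ and the projections $X_\infty\twoheadrightarrow\mathrm{Cl}_2(k_n)$ are surjective, by the total ramification of the prime above $2$.

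For item 1 I would argue both directions through these layers. If $d=pq$ with $p\equiv 5$ and $q\equiv 3\pmod 8$, then the theorem preceding this corollary shows that every $\mathrm{Cl}_2(k_n)=\mathrm{Cl}_2(L_{n+2,d})$ is cyclic; an inverse limit of finite cyclic $2$-groups along surjective transition maps has only cyclic finite quotients and is therefore procyclic, so $X_\infty$ is cyclic as a $\mathbb{Z}_2$-module. Conversely, if $X_\infty$ is cyclic then each $\mathrm{Cl}_2(k_n)$, being a quotient of $X_\infty$, is cyclic, and the same theorem forces $d=pq$ with $p\equiv 5$, $q\equiv 3\pmod 8$ --- the competing cyclic case $d=q\equiv 7\pmod 8$ being ruled out by hypothesis.

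For item 2, once $X_\infty$ is known to be cyclic over $\mathbb{Z}_2$ it is isomorphic to $\mathbb{Z}_2$, to $\mathbb{Z}/2^a$, or to $0$; in each case its free $\mathbb{Z}_2$-rank is at most $1$. Since $\lambda$ equals the $\mathbb{Z}_2$-rank of $X_\infty$, we conclude $\lambda\in\{0,1\}$ at once. I note that cyclicity over $\mathbb{Z}_2$ already makes $X_\infty$ finitely generated over $\mathbb{Z}_2$, which gives $\mu=0$ with no appeal to Ferrero--Washington (although the latter applies as well, $k$ being abelian over $\mathbb{Q}$).

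The main obstacle is the faithful passage between the infinite module $X_\infty$ and the finite layers: one must be sure that cyclicity transfers in both directions, which rests entirely on the surjectivity of the norm maps and of the projections $X_\infty\to\mathrm{Cl}_2(k_n)$, and hence on the total ramification of the prime above $2$ in $k_\infty/k$. A second, more delicate point is the degenerate case $X_\infty=0$: by Theorem \ref{thm the parity} this happens precisely when $d$ is a prime congruent to $3$ or $5\pmod 8$ (then $h_2(L_{m,d})=1$ for all $m$), so strictly the equivalence in item 1 should be read for nontrivial $X_\infty$, matching the ``cyclic non-trivial'' phrasing of the preceding theorem; once this case is set aside the argument above is complete.
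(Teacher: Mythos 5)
Your proposal is correct and follows exactly the route the paper intends: the corollary is stated without its own proof precisely because it is the passage to the inverse limit of the preceding theorem on $\mathrm{Cl}_2(L_{m,d})$, using the identification $k_n=L_{n+2,d}$, the surjectivity of the transition maps coming from total ramification above $2$, and $\lambda=\mathrm{rank}_{\mathbb{Z}_2}(X_\infty)\leq 1$ for a cyclic $\mathbb{Z}_2$-module. Your remark that the trivial case $X_\infty=0$ (when $d$ is a prime $\equiv 3$ or $5\pmod 8$) forces one to read ``cyclic'' as ``cyclic non-trivial'' in item 1 is a legitimate and correctly identified imprecision in the statement, matching the phrasing of the theorem it is deduced from.
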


 \begin{remark}
 	For any integer $r\geq 0 $ there are infinitely many   imaginary biquadratic number fields $k$ such that $rank(\mathrm{Cl}_2(k_n)))=r$,  $\forall n \geq 1$,  where $k_n$ is the nth layer of $k_\infty /k$.
 \end{remark} 	
  	
  	\section{\textbf{Appendix}}	
  	Let $m\geq 3$ be an integer and $d$  an odd positive square-free integer. Set $\pi_3= 2$,  $\pi_4=2+\sqrt{2}$, ...,  $\pi_m=2+\sqrt{\pi_{m-1}}$ and  $K_m^+=\mathbb{Q}(\sqrt{ \pi_{m}})$.
  	The  maximal real  subfield of $L_{m, d}$ is   $L_{m, d}^+=K_{m}^+(\sqrt{d})$. Note that,  for several cases of positive square-free integers $d$,   the rank of the $2$-class group of $L_{m, d}^+$ is well known in terms of
  	the decomposition of those primes in the cyclotomic tower of $\mathbb{Q}$ that ramify in $\mathbb{Q}(\sqrt{d})/\mathbb{Q}$.
  	In this appendix, we explicitly give   the rank of the $2$-class group of  $L_{m, d}^+$ according to the number of prime divisors of $d$ assuming that   all the prime divisors of $d$ are congruent to $3$ or $5$ $\pmod 8$.
  	
  	\begin{lemma}\label{lm decom in the real field}
  	Let $p$ be a rational prime. Then for all $ m\geq 3$,  $p$ is inert in $K_m^+$ if and only if $p$ is congruent to $3$ or $5\pmod 8$.
  \end{lemma}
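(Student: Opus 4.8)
The plan is to work entirely inside $\mathrm{Gal}(K_m^+/\mathbb{Q})$ and translate the statement ``$p$ is inert in $K_m^+$'' into an explicit condition on the class of $p$ in an abelian group, which I can then read off modulo $8$. Since $K_m^+$ is the maximal real subfield of the abelian field $K_m=\mathbb{Q}(\zeta_{2^m})$, it is itself abelian over $\mathbb{Q}$ and
$$\mathrm{Gal}(K_m^+/\mathbb{Q})\cong(\mathbb{Z}/2^m\mathbb{Z})^\times/\{\pm1\},$$
where $\{\pm1\}$ is the subgroup generated by complex conjugation. Recalling that for $m\geq3$ one has $(\mathbb{Z}/2^m\mathbb{Z})^\times=\langle-1\rangle\times\langle5\rangle$ with $\langle5\rangle$ cyclic of order $2^{m-2}$, and that $-1\notin\langle5\rangle$ because every power of $5$ is $\equiv1\pmod4$, the quotient is cyclic of order $2^{m-2}=[K_m^+:\mathbb{Q}]$, generated by the class of $5$.

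First I would observe that the only rational prime ramifying in $K_m$, hence in $K_m^+$, is $2$; so every odd prime $p$ is unramified in $K_m^+$ and has a well-defined Frobenius. In a cyclic extension an unramified prime is inert precisely when its Frobenius generates the whole Galois group. Under the Artin isomorphism the Frobenius at $p$ corresponds to the image of the class of $p$ in $(\mathbb{Z}/2^m\mathbb{Z})^\times/\{\pm1\}$. Writing $p\equiv\pm5^{b}\pmod{2^m}$ (possible since $p$ is odd), this image is the class of $5^{b}$, which generates the cyclic group of order $2^{m-2}$ if and only if $b$ is coprime to $2^{m-2}$, that is, if and only if $b$ is odd.

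It then remains to convert ``$b$ odd'' into a congruence modulo $8$. Reducing modulo $8$ collapses $5^2\equiv1$, so $5^{b}\equiv5\pmod8$ when $b$ is odd and $5^{b}\equiv1\pmod8$ when $b$ is even; combining with the sign, the four cases ($b$ odd with $+$; $b$ odd with $-$; $b$ even with $+$; $b$ even with $-$) give $p\equiv5,3,1,7\pmod8$ respectively. Hence $b$ is odd exactly when $p\equiv3$ or $5\pmod8$, which is the desired equivalence, uniformly for all $m\geq3$. The base instance $m=3$ is just $K_3^+=\mathbb{Q}(\sqrt2)$ together with the classical evaluation $\left(\frac{2}{p}\right)=-1\iff p\equiv3,5\pmod8$, consistent with the general computation.

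The only point requiring genuine care is the passage to the quotient by $\{\pm1\}$: one must check that this collapses the sign ambiguity $\pm5^{b}$ to the single class of $5^{b}$, so that inertia depends only on the parity of $b$ and not on the sign. Everything else is the standard dictionary between splitting of unramified primes in abelian extensions and the image of Frobenius, together with the elementary reduction modulo $8$. I would also note that this recovers and runs parallel to Proposition \ref{prop deco to 2}: the condition $p\equiv3,5\pmod8$ is likewise exactly the condition for $p$ to split into precisely two primes of $K_m$, since that too amounts to the order of $p$ in $(\mathbb{Z}/2^m\mathbb{Z})^\times$ being maximal, i.e.\ to $b$ odd.
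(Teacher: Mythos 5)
Your proof is correct, and it takes a genuinely different route from the paper. The paper argues by induction up the tower $K_3^+\subset K_4^+\subset\cdots$: the base case is the classical fact that $p$ is inert in $K_3^+=\mathbb{Q}(\sqrt2)$ iff $\left(\frac{2}{p}\right)=-1$ iff $p\equiv3,5\pmod8$, and the inductive step pushes the quadratic residue symbol $\left(\frac{\pi_{m+1}}{\mathfrak p}\right)$ down the tower via the norm relations $N(2+\sqrt{\pi_m})=4-\pi_m=2-\sqrt{\pi_{m-1}}$, eventually reducing everything to $\left(\frac{2}{p}\right)=-1$. You instead work globally: you identify $\mathrm{Gal}(K_m^+/\mathbb{Q})$ with the cyclic group $(\mathbb{Z}/2^m\mathbb{Z})^\times/\{\pm1\}=\langle\overline{5}\rangle$ of order $2^{m-2}$, note that inertia of an unramified prime in a cyclic extension means its Frobenius generates, and then read off the generating condition modulo $8$ from $p\equiv\pm5^b\pmod{2^m}$. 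Your approach is more conceptual and handles all $m$ at once without induction, and it makes transparent why the answer is uniform in $m$ and why it coincides with the splitting condition of Proposition \ref{prop deco to 2}; the paper's approach is more computational but stays within the norm-residue-symbol toolkit used throughout the rest of the article (e.g.\ in Lemma \ref{lm computations}), which is presumably why the authors chose it. One small point you handled correctly but that deserves the emphasis you gave it: passing to the quotient by $\{\pm1\}$ is exactly what makes the sign in $p\equiv\pm5^b$ irrelevant, so that inertia depends only on the parity of $b$.
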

  \begin{proof}
  	For $m=3$,     $p$ is inert in $K_3=\mathbb{Q}(\sqrt{2})$ if and only if $p$ is congruent to $3$ or $5\pmod 8$. Thus $p$ is inert in $K_m^+$,  implies that $p$ is congruent to $3$ or $5\pmod 8$.
  	We prove the converse by induction. Suppose that $p$ is inert in $K_m^+$ and show that it is inert in $K_{m+1}^+=\mathbb{Q}(\sqrt{\pi_{m+1}})$. Let   $\mathfrak p$  denote the prime ideal of $K_i^+$ lying over $p$,  for $i\leq m$.  We have $\left( \frac{  {\pi_{m+1}}}{\mathfrak p} \right)=\left( \frac{  {N_{K_m^+/K_{m-1}^+}(\pi_{m+1})}}{\mathfrak p} \right)=\left( \frac{  { 4-{\pi_{m+1}}}}{\mathfrak p} \right)=\left( \frac{  { 2-\sqrt{\pi_{m}}}}{\mathfrak p} \right)=...=\left( \frac{{2}}{ p} \right)=-1.$ It follows  that $p$ is inert in $K_{m+1}^+$.
  \end{proof}

  	\begin{remark}\label{rmk 1}
  	Let  $d=p_1...p_r$  be a square-free integer such that all the prime divisors $p_i$ of $d$ are congruent to $3$ or $5\pmod 8$ and $m\geq 3$.
  	\begin{enumerate}[\rm $\bullet$ ]
  		\item If $d\equiv 1\pmod 4$,  then we have $r$ primes that ramify in  $L_{m, d}^+/K_m^+$,  which are exactly the prime divisors of $d$ in $K_m^+$.
  		\item If $d\not\equiv 1\pmod 4$,  then we have $r+1$ primes that ramify in  $L_{m, d}^+/K_m^+$,  which are exactly the prime of $K_m^+$ lying over $2$ and the prime divisors of $d$ in $K_m^+$.
  	\end{enumerate}	
  \end{remark}

  	\begin{lemma}\label{lm symbols values}
  	Let $m\geq 3$ and $d$  be a positive square-free integer such that all the prime divisors of $d$ are congruent to $3$ or $5\pmod 8$
  	and $\mathfrak p_{K_{m}^+}$ be a prime ideal of $K_m^+$ dividing $d$.
  	Then 	
  	$$\left( \frac{\xi_{k, m}, d}{\mathfrak p_{K_{m}^+}}\right)=\left\{\begin{array}{c c l}-1& \text{ if } & p\equiv 3\pmod 8\text{ and }k\equiv  \pm 3 \pmod 8\\
  	1&     \text{elsewhere},
  	\end{array}\right.$$
  	where $p$ is the rational prime in $ \mathfrak p_{K_{m}^+}$.
  \end{lemma}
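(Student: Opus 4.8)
The plan is to reduce the computation over $K_m^+$ to the one already carried out over $K_m$ in Lemma \ref{lm computations}, rather than redo a full norm descent inside the real tower. The point is that the two fields are related by $K_m=K_m^+(i)$ with $i=\zeta_4$, and likewise $L_{m,d}=L_{m,d}^+(i)$; since $\xi_{k,m}\in K_m^+$ and $d\in\mathbb{Q}$, the very same elements define the norm residue symbol in both settings, so everything comes down to comparing the local behaviour at the primes above $p$.

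First I would establish that $\mathfrak p_{K_m^+}$ splits in the quadratic extension $K_m/K_m^+$. By Lemma \ref{lm decom in the real field} the prime $p\equiv 3,5\pmod 8$ dividing $d$ is inert in $K_m^+$, so its residue field is $\mathbb{F}_{p^{2^{m-2}}}$, an extension of $\mathbb{F}_p$ of even degree (since $m\geq 3$). Hence this residue field contains $\mathbb{F}_{p^2}$, in which $-1$ is a square, so $i$ already lies in the completion $(K_m^+)_{\mathfrak p_{K_m^+}}$. Consequently $\mathfrak p_{K_m^+}$ splits in $K_m=K_m^+(i)$ into two primes $\mathfrak p_{K_m},\mathfrak p_{K_m}'$ lying over it (for $m\geq 4$ this is also the content of Proposition \ref{prop deco to 2}), and the canonical embedding $K_m^+\hookrightarrow K_m$ induces an isomorphism of completions $(K_m^+)_{\mathfrak p_{K_m^+}}\cong(K_m)_{\mathfrak p_{K_m}}$.

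Next I would invoke that the norm residue symbol is, by definition, the local Hilbert symbol computed in the completion and depends only on the images of its two arguments there. Under the isomorphism of completions above, $\xi_{k,m}$ and $d$ are carried to $\xi_{k,m}$ and $d$, so
$$\left(\frac{\xi_{k,m},d}{\mathfrak p_{K_m^+}}\right)=\left(\frac{\xi_{k,m},d}{\mathfrak p_{K_m}}\right).$$
Since $\mathfrak p_{K_m^+}\mid d$, the same holds for $\mathfrak p_{K_m}$, so we are exactly at a prime dividing $d$ to which Lemma \ref{lm computations} applies. That lemma evaluates the right-hand side: it equals $-1$ precisely when $p\equiv 3\pmod 8$ and $k\equiv\pm3\pmod 8$, and $1$ in every other case (cases $1$ and $2$ of that lemma, according as $p\equiv 5$ or $p\equiv 3\pmod 8$). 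This is exactly the asserted formula, and the value is independent of which of $\mathfrak p_{K_m},\mathfrak p_{K_m}'$ is chosen because $\xi_{k,m},d\in K_m^+$ are fixed by $\mathrm{Gal}(K_m/K_m^+)$.

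The only point requiring genuine care is the splitting step together with the completion-invariance of the symbol, since all the arithmetic content is imported through it; once the local fields are identified the conclusion is immediate. An alternative, more computational route would mimic the descent of Lemma \ref{lm computations} directly inside the real tower $K_m^+\supset K_{m-1}^+\supset\cdots\supset K_3^+=\mathbb{Q}(\sqrt2)$, computing $N_{K_m^+/K_{m-1}^+}(\xi_{k,m})$ step by step via the identity $\sqrt{\pi_m}=\zeta_{2^m}+\zeta_{2^m}^{-1}$; this succeeds as well but is considerably longer, and I would fall back on it only if the clean local comparison were to break down for some small $m$.
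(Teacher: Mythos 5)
Your proposal is correct and follows essentially the same route as the paper: the paper's proof likewise observes that, by Proposition \ref{prop deco to 2} and Lemma \ref{lm decom in the real field}, the prime $\mathfrak p_{K_m^+}$ splits in $K_m/K_m^+$, and then imports the value of the symbol from Lemma \ref{lm computations}. You merely spell out the local justification (identification of completions, Galois-invariance of $\xi_{k,m}$ and $d$) that the paper leaves implicit.
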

  \begin{proof}
  	By lemmas \ref{prop deco to 2} and \ref{lm decom in the real field},       $ \mathfrak p_{K_{m}^+}$ decomposes into the product of two primes of $K_{m}$.   Hence,  the result follows directly from Lemma \ref{lm computations}.
  \end{proof}

  	\begin{theorem}
  	Let $m\geq 3$ and $d=p_1...p_r$   be a positive      square-free integer  such that every prime divisor $p_i$ of $d$ is congruent to $3$ or $5$ $\pmod 8$. Then
  	$$rank_2(L_{m, d}^+)=	\left\{   \begin{tabularx}{\linewidth}{c cX}
  	$r-2$& \text{ if }  &  $d \equiv 1\pmod 4 $ \text{ and } $d$  is divisible by a \\&&prime congruent to $3\pmod 4$,   \\
  	$r-1$& \text{ elsewhere. }  &
  	\end{tabularx}      \right.$$	
  \end{theorem}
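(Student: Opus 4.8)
The plan is to mirror the proof of Theorem \ref{thm rank}: first settle the three small layers $m\in\{3,4,5\}$ by the ambiguous class number formula applied to $L_{m,d}^+/K_m^+$, and then propagate the answer to all $m\ge 3$ through Fukuda's theorem. Observe that $\bigcup_{m\ge 3}L_{m,d}^+=\mathbb{Q}(\sqrt d)\cdot\big(\bigcup_{m\ge 3}K_m^+\big)$ is the cyclotomic $\mathbb{Z}_2$-extension of $\mathbb{Q}(\sqrt d)$, with $L_{m,d}^+$ its $(m-2)$th layer; since the primes above $2$ are totally ramified in this tower, one may take $n_0=0$ in Theorem \ref{lm fukuda}. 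As the value claimed for $rank_2(L_{m,d}^+)$ depends only on $r$ and on $d\bmod 4$, not on $m$, proving it for $m=3,4,5$ yields equal ranks at three consecutive layers, and part~2 of Theorem \ref{lm fukuda} then forces the rank to be constant for every $m\ge 3$.

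For $m\in\{3,4,5\}$ the ring of integers of $K_m$ is principal (see \cite{masley}), so $h(K_m^+)=1$ and, by \cite[Theorem 8.2]{washington1997introduction} together with Lemma \ref{lm cyclo units}, $E_{K_m^+}=\langle -1\rangle\times\langle \xi_{k,m}\rangle$. Since $h(K_m^+)$ is odd, the ambiguous class number formula (\cite{Gr}) gives $rank_2(L_{m,d}^+)=t-1-e$, where $t$ is the number of finite primes of $K_m^+$ ramifying in $L_{m,d}^+$ --- equal to $r$ if $d\equiv 1\pmod 4$ and to $r+1$ otherwise, by Remark \ref{rmk 1} --- and $2^{e}=(E_{K_m^+}:E_{K_m^+}\cap N(L_{m,d}^+))$. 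There are no ramified infinite primes, because $d>0$ makes $L_{m,d}^+$ totally real. The crucial reduction is that a unit $u$ is a norm if and only if $\left(\frac{u,d}{\mathfrak p}\right)=1$ for every prime $\mathfrak p\mid d$: by the Hasse norm theorem $u$ must be a local norm at every place; the symbol is trivial at the real places (as $d>0$) and at all primes not dividing $2d$; and since $2$ is totally ramified in $K_m^+$ there is a single prime $\mathfrak p_2$ above $2$, so the product formula gives $\left(\frac{u,d}{\mathfrak p_2}\right)=\prod_{\mathfrak p\mid d}\left(\frac{u,d}{\mathfrak p}\right)$, which is automatically $1$ once all the symbols at $\mathfrak p\mid d$ are.

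It then remains to compute $e$ from the generators. For $u=-1$, at a prime $\mathfrak p\mid p$ one has $\left(\frac{-1,d}{\mathfrak p}\right)=\left(\frac{-1}{\mathfrak p}\right)$, which is $1$ because $p$ is inert in $K_m^+$ (Lemma \ref{lm decom in the real field}), so the residue degree $2^{m-2}$ is even and $-1$ is a square in the residue field; hence $-1$ is always a norm. For the cyclotomic units, Lemma \ref{lm symbols values} shows $\left(\frac{\xi_{k,m},d}{\mathfrak p}\right)=-1$ exactly when $\mathfrak p$ lies over a prime $\equiv 3\pmod 8$ and $k\equiv\pm3\pmod 8$. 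Thus every such $\xi_{k,m}$ maps to the same vector in $\prod_{\mathfrak p\mid d}\{\pm1\}$, namely the one with entry $-1$ over each prime $\equiv3\pmod8$, so $e=1$ if $d$ has a prime divisor $\equiv3\pmod8$ (equivalently $\equiv3\pmod4$) and $e=0$ otherwise. Matching $t$ and $e$ against the three possibilities for $d$ gives $rank_2=r-2$ precisely when $d\equiv1\pmod4$ and $d$ is divisible by a prime $\equiv3\pmod4$ (then $t=r$, $e=1$), and $rank_2=r-1$ in the remaining cases (either $t=r,\ e=0$ or $t=r+1,\ e=1$), as claimed.

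The main obstacle I anticipate is the norm-reduction step: handling the place above $2$, where no direct symbol computation is available, via the product formula and the total ramification of $2$, and confirming that the symbols at the infinite places are trivial. Once that reduction is in place, the computation of $e$ is a direct application of Lemma \ref{lm symbols values}, and the passage to general $m$ is routine through Theorem \ref{lm fukuda}.
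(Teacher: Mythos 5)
Your proposal is correct and follows exactly the route the paper intends: its own proof of this theorem is just the one line ``Similar to the proof of Theorem \ref{thm rank}'', and your argument is precisely that adaptation, using Remark \ref{rmk 1} for the count of ramified primes, Lemma \ref{lm symbols values} for the norm residue symbols, and Theorem \ref{lm fukuda} to pass from $m\in\{3,4,5\}$ to all $m$. You in fact supply details the paper leaves implicit (the product-formula reduction at the prime above $2$ and the triviality of the symbols for $-1$ and at the infinite places), and these check out.
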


  \begin{proof}
  	Similar to the proof of  Theorem \ref{thm rank}.
  \end{proof}

  	\begin{proposition}
  	Let $d=pq$ with    $p\equiv 5\pmod 8$ and $q\equiv 3\pmod 8.$ Then for all $m\geq 3$,  we have:
  	$$h_2(L_{m, d}^+)=2.$$	
  \end{proposition}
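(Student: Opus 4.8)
The plan is to first reduce the claim to a statement about a single cyclic group of order two, then pin down the base layers by a genus-theoretic computation, and finally propagate the value up the cyclotomic tower with Fukuda's theorem. First I would record that for $d=pq$ with $p\equiv 5\pmod 8$ and $q\equiv 3\pmod 8$ one has $d\equiv 3\pmod 4$ and $r=2$, so that $d\not\equiv 1\pmod 4$. Hence the previous theorem places us in its \emph{elsewhere} case and gives $rank_2(L_{m,d}^+)=r-1=1$ for every $m\geq 3$. Consequently $\mathrm{Cl}_2(L_{m,d}^+)$ is cyclic and non-trivial for all $m\geq 3$, and it only remains to show that its order is exactly $2$, i.e. that $L_{m,d}^+$ admits no unramified cyclic quartic extension.

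Next I would exploit that the fields $L_{m,d}^+=K_m^+(\sqrt{d})$ are precisely the layers of the cyclotomic $\mathbb{Z}_2$-extension of $\mathbb{Q}(\sqrt{d})$. Since $d\equiv 3\pmod 4$, the prime $2$ ramifies in $\mathbb{Q}(\sqrt{d})$, so the unique prime above $2$ is totally ramified throughout this tower and one may take $n_0=0$ in Theorem \ref{lm fukuda}. The decisive input is therefore the value of $h_2$ at two consecutive base layers: I would compute $h_2(L_{3,d}^+)=h_2(\mathbb{Q}(\sqrt 2,\sqrt{pq}))$ and $h_2(L_{4,d}^+)$ and show both equal $2$. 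For $m\in\{3,4,5\}$ the ring of integers of $K_m$ is principal, so $h(K_m^+)=1$ and the ambiguous class number formula applies cleanly; using Remark \ref{rmk 1} (three ramified primes in $L_{m,d}^+/K_m^+$, namely the prime over $2$ and the two inert primes of $K_m^+$ lying over $p$ and $q$ by Lemma \ref{lm decom in the real field}) together with the unit-norm computations of Lemma \ref{lm symbols values}, one recovers $rank_2=1$ and identifies the unique unramified quadratic extension as the genus field.

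Finally, to upgrade \emph{rank} $1$ to \emph{order} $2$ at the base, I would show that the Hilbert $2$-class field of $L_{3,d}^+$ coincides with its genus field, so that no unramified cyclic quartic extension exists; for the degree-$4$ field $\mathbb{Q}(\sqrt 2,\sqrt{pq})$ this may be read off from \cite{chemsZkhnin1}. Granting $h_2(L_{3,d}^+)=h_2(L_{4,d}^+)=2$, part 1 of Theorem \ref{lm fukuda} immediately yields $h_2(L_{m,d}^+)=2$ for all $m\geq 3$. The main obstacle is precisely this last sharpening: the ambiguous class number formula controls only the rank, so ruling out a cyclic quartic unramified extension at the two base layers (equivalently, verifying the two-consecutive-equal-layers hypothesis of Fukuda) is where the real work lies, since the class number is a priori non-decreasing in the tower and cannot be bounded above by the base case alone.
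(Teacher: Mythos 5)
Your overall skeleton --- anchor the value of $h_2$ at two consecutive layers of the cyclotomic $\ZZ_2$-extension of $\mathbb{Q}(\sqrt{pq})$ and propagate with part 1 of Theorem \ref{lm fukuda} --- is exactly the paper's strategy, and your observations that the rank theorem gives cyclicity and that $n_0=0$ is admissible are correct. But there is a genuine gap where you yourself locate ``the real work'': you never actually establish the hypothesis of Fukuda's theorem. You propose to verify $h_2(L_{3,d}^+)=h_2(L_{4,d}^+)=2$, yet you give no method at all for the degree-$8$ field $L_{4,d}^+=\mathbb{Q}(\sqrt{2+\sqrt{2}},\sqrt{pq})$, and even for $L_{3,d}^+$ you only point to \cite{chemsZkhnin1}, which treats \emph{imaginary triquadratic} fields and does not obviously yield the $2$-class number of the real biquadratic field $\mathbb{Q}(\sqrt{2},\sqrt{pq})$. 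Since, as you note, the class number can only grow up the tower, ``rank $1$ at every layer'' plus ``order $2$ at one layer'' does not close the argument; without the second consecutive layer the proof is incomplete.

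The paper sidesteps the degree-$8$ computation by taking the two consecutive layers to be $k_0=\mathbb{Q}(\sqrt{pq})$ and $k_1=L_{3,d}^+$ (legitimate precisely because $n_0=0$). For $k_0$, genus theory gives $h_2(pq)=2$ (and $h_2(2pq)=2$) by \cite[Corollary 19.7]{connor88}. For $k_1$ it applies Kuroda's class number formula, which requires knowing the unit index: the paper shows $a\pm1$ and $x\pm1$ are not squares (where $\varepsilon_{pq}=a+b\sqrt{pq}$, $\varepsilon_{2pq}=x+y\sqrt{2pq}$ have norm $+1$) via a contradiction with $\left(\frac{2}{p}\right)=-1$, concludes from \cite[Proposition 3.3]{AZT2016} that $\{\varepsilon_2,\varepsilon_{pq},\sqrt{\varepsilon_{pq}\varepsilon_{2pq}}\}$ is a fundamental system of units of $L_{3,d}^+$, and then computes $h_2(L_{3,d}^+)=\frac{1}{4}\cdot 2\cdot h_2(pq)h_2(2pq)h_2(2)=2$. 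This unit computation is the substantive content your proposal is missing; to repair your version you would either need to adopt this route or supply an actual computation of $h_2(L_{4,d}^+)$.
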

  \begin{proof}
  	
  	Let  $\varepsilon_{pq}=a+b\sqrt{pq}$ with $a, b\in \mathbb{Z}$ (resp. $\varepsilon_{2pq}=x+y\sqrt{2pq}$ with $x, y\in \mathbb{Z}$ ) be the fundamental unit of $\mathbb{Q}(\sqrt{pq})$ (resp. $\mathbb{Q}(\sqrt{2pq})$). It is known that $N(\varepsilon_{pq})=N(\varepsilon_{2pq})=1$. We have $a^2-1=b^2pq$ and $x^2-1=y^22pq$.   	So   $a\pm 1$ and $x\pm1$ are not squares in $\mathbb{N}$. In fact,  if $x\pm1$ is a square in $\mathbb{N}$,  then
  	$$	\left\{ \begin{array}{ccc}
  	x\pm1&=&y_1^2\\
  	x\mp1&=&2pqy_2^2,
  	\end{array}\right. $$
  	for some integers $y_1$ and $y_2$ such that $y=y_1y_2$.		
  	So $1=\left(\frac{y_1^2}{p}\right)=\left(\frac{x\pm1}{p}\right)=\left(\frac{x\mp1\pm 2}{p}\right)=\left(\frac{\pm 2}{p}\right)=\left(\frac{ 2}{p}\right)=-1, $ which is absurd. Similarly $a\pm1$ is not a square in $\mathbb{N}$. It follows by \cite[Proposition 3.3]{AZT2016} that $\{\varepsilon_2,  \varepsilon_{pq},  \sqrt{\varepsilon_{pq}\varepsilon_{2pq}}\}$	is a fundamental system of units of 	$L_{3, d}^+=\mathbb{Q}(\sqrt{2}, \sqrt{d})$. Note that by \cite[Corollary 19.7]{connor88},  we have $h_2(pq)=h_2(2pq)=2$.
   	So by Kuruda's class number formula (see \cite{lemmermeyer1994kuroda}),  we obtain
  	$$h_2(L_{3, d}^+)=\frac{1}{4}\cdot 2\cdot h_2(pq)h_2(2pq)h_2(2)=2.$$
  	Thus $h_2(d)=h_2(L_{3, d}^+)=2$. So the result by Theorem \ref{lm fukuda}.
  \end{proof}

  Under the hypothesis of the previous proposition we deduce that the $\mu$-invariant and the $\lambda$-invariant vanishes for such field,  as well we deduce that   the	 $\nu$-invariant equals $1$. For more results on the Iwasawa invariants of real quadratic number fields see \cite{MouhibMouvahhidi}. We close our paper by the following beautiful result:	

  \begin{theorem}
  	Let $n$ be an integer such that every prime $p$ appearing in the decomposition of $n$ with an odd exponent  is congruent to $1\pmod{16} $ or    $7\pmod 8$. Then the equation:
  	$$n=x^2-y^2\zeta_{8}, $$
  	has a solution $(x, y)$ in $\mathbb{Q}(\zeta_{8})\times \mathbb{Q}(\zeta_{8}).$
  \end{theorem}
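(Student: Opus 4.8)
The plan is to reinterpret the equation as a relative norm condition and then invoke the Hasse norm theorem. Observe that $x^2-\zeta_8 y^2=N_{K_4/K_3}(x+y\sqrt{\zeta_8})$, where $K_3=\mathbb{Q}(\zeta_8)$ and $K_4=\mathbb{Q}(\zeta_{16})=K_3(\sqrt{\zeta_8})$. Hence $n=x^2-\zeta_8 y^2$ is solvable in $K_3\times K_3$ if and only if $n$ is a norm from the quadratic (hence cyclic) extension $K_4/K_3$. By the Hasse norm theorem, $n$ is a global norm if and only if it is a local norm everywhere, that is, if and only if the quadratic norm residue symbol $\left(\frac{\zeta_8,n}{\mathfrak p}\right)$ of $K_4/K_3$ equals $1$ for every prime $\mathfrak p$ of $K_3$. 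Since $K_3$ is totally complex, the archimedean places impose no condition; and since $\zeta_8$ is a unit, the symbol is trivial at every finite prime not dividing $2n$. Thus only the primes of $K_3$ above $2$ and above the prime divisors of $n$ must be examined.

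Next I would reduce to the case of a single prime. A perfect square in $\mathbb{Z}$ is a square in $K_3$ and therefore a norm, while $-1=i^2$ with $i=\zeta_8^2\in K_3$ is a norm as well. Writing $n$ as $(\text{sign})\times(\text{square})\times(\text{product of the primes occurring to an odd power})$ and using that the norms form a group, it suffices to show that each prime $p$ with $p\equiv 1\pmod{16}$ or $p\equiv 7\pmod 8$ is a norm from $K_4/K_3$. Note that the hypothesis guarantees $2$ occurs only to an even power, since $2\not\equiv 1\pmod{16}$ and $2\not\equiv 7\pmod 8$; hence all odd-power primes are odd.

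The core computation is the evaluation of $\left(\frac{\zeta_8,p}{\mathfrak p}\right)$ at the primes $\mathfrak p\mid p$. Such a prime is tame and $p$ is a uniformizer there, so the symbol coincides with the quadratic residue symbol $\left(\frac{\zeta_8}{\mathfrak p}\right)$, which is $1$ exactly when $\zeta_8$ is a square in the residue field $\mathbb{F}_{p^f}$, i.e. when $16\mid p^f-1$, where $f$ is the residue degree of $p$ in $K_3$. Using that $f$ is the order of $p$ in $(\mathbb{Z}/8\mathbb{Z})^\times$: if $p\equiv 1\pmod{16}$ then $f=1$ and $p^f=p\equiv 1\pmod{16}$; if $p\equiv 7\pmod 8$ then $f=2$ and $p^2\equiv 1\pmod{16}$. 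In both cases $16\mid p^f-1$, so every symbol above $p$ equals $1$. (This is consistent with Lemma \ref{lemma 1 values of symbls p=3 mod 8}, where the complementary classes $p\equiv 3,5\pmod 8$ give $p^2\equiv 9\pmod{16}$ and hence the value $-1$.)

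Finally, the dyadic prime is treated not by a direct, wildly ramified computation but by reciprocity. The product formula $\prod_{\mathfrak p}\left(\frac{\zeta_8,p}{\mathfrak p}\right)=1$, combined with the triviality of all the symbols at the archimedean places and at the primes above $p$, forces the symbol at the unique prime of $K_3$ above $2$ (recall $2$ is totally ramified in $K_3$) to equal $1$ as well. Therefore all local symbols are trivial, and $p$ is a norm from $K_4/K_3$ by Hasse, which finishes the argument. The main obstacle is precisely the symbol at the wildly ramified dyadic prime; routing it through the product formula, once the tame symbols above $p$ have been shown to vanish, is the step that makes the proof go through.
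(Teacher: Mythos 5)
Your proof is correct and follows essentially the same route as the paper: interpret $x^2-y^2\zeta_8$ as the norm form of $\mathbb{Q}(\zeta_{16})/\mathbb{Q}(\zeta_8)$, check that every local norm residue symbol is trivial, and conclude by the Hasse norm theorem. You are in fact more explicit than the paper at two points, namely the tame evaluation of $\left(\frac{\zeta_8,p}{\mathfrak p}\right)$ via the residue degree (the paper simply asserts the value $1$) and the use of the product formula to handle the wildly ramified dyadic prime, which the paper absorbs without comment into the case $\mathfrak p\nmid n$.
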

  \begin{proof}
  	We can suppose that  $n$ is  a positive square-free integer of prime divisors  congruent to $1\pmod{16}$  or    $7\pmod 8$. Let $\mathfrak{p}$ be a prime ideal of $\mathbb{Q}(\zeta_{8})$. If $\mathfrak{p}$ does not divide $n$,  then
  	$\left(\frac{\zeta_8, n}{\mathfrak p}\right)=\left(\frac{n, \zeta_8}{\mathfrak p}\right)=1.$
  	If $\mathfrak{p}$ is lying over a prime divisor $p$ of $n$,  then  we have
  	$\left(\frac{n, \zeta_8}{\mathfrak p}\right)=\left(\frac{\zeta_8, n}{\mathfrak p}\right)=\left(\frac{\zeta_8, p}{\mathfrak p}\right)=1$.
  	So $n$ is a norm in $K'=\mathbb{Q}(\sqrt{\zeta_{8}})=\mathbb{Q}(\zeta_{16}).$ Let $\alpha=x+y\zeta_{16}$ be an  element
  	of $\mathbb{Q}(\zeta_{16})$  such that $n=N_{K'/K}(\alpha)=(x+y\zeta_{16})(x-y\zeta_{16})=x^2-y^2\zeta_{8}$. Which gives the result.
  \end{proof}

  \section*{\textbf{Acknowledgment}}
  We  would like  to take this opportunity to sincerely thank professor Radan Kučera for his remarks that helped to complete the  proof of   Theorem \ref{thm the parity}.

  We also thank the referee for his/her suggestions that held us to improve our paper.

\end{document}